\newtheorem{theorem}{Теорема}
\newtheorem{statement}{Утверждение}
\newtheorem{comment}{Замечание}
\newtheoremstyle{neosn}{0.5\topsep}{0.5\topsep}{\rm}{}{\sc}{.}{ }{\thmname{#1}\thmnumber{ #2}\thmnote{ {\mdseries#3}}}
\theoremstyle{neosn}
\newtheorem{definition}{Определение}
\begin{document} 
		
		\begin{center}
			\hfill \break
			
			\footnotesize{ФЕДЕРАЛЬНОЕ ГОСУДАРСТВЕННОЕ БЮДЖЕТНОЕ ОБРАЗОВАТЕЛЬНОЕ}\\ 
			\footnotesize{УЧРЕЖДЕНИЕ ВЫСШЕГО ОБРАЗОВАНИЯ}\\
			\small{<<МОСКОВСКИЙ ГОСУДАРСТВЕННЫЙ УНИВЕРСИТЕТ}\\
			\small{имени М.В. ЛОМОНОСОВА>>}\\
			\hfill \break
			\normalsize{МЕХАНИКО-МАТЕМАТИЧЕСКИЙ ФАКУЛЬТЕТ}\\
			\hfill \break
			\normalsize{КАФЕДРА ДИФФЕРЕНЦИАЛЬНОЙ ГЕОМЕТРИИ И ПРИЛОЖЕНИЙ}\\
			\hfill\break
			\hfill \break
			\large{ВЫПУСКНАЯ КВАЛИФИКАЦИОННАЯ РАБОТА} \\
			\large{(ДИПЛОМНАЯ РАБОТА)} \\
			\large{специалиста} \\
			\hfill \break 
			\large{\textbf{ОСОБЕННОСТИ СЛОЕНИЙ ЛИУВИЛЛЯ ДЛЯ БИЛЛИАРДОВ В НЕВЫПУКЛЫХ ОБЛАСТЯХ}}\\
			\hfill \break
			\hfill \break
			\hfill \break
		\begin{flushright}
				Выполнил студент \\
			607 академической группы \\
			Москвин Виктор Александрович \\
		\end{flushright}
			\hfill \break
		\begin{flushright}
			\underline{\hspace{5cm}} \\
			подпись студента \\
		\end{flushright}
			\hfill \break
			\begin{flushright}
			Научные руководители: \\
			Академик Анатолий Тимофеевич Фоменко
		\end{flushright}
		\begin{flushright}
			\underline{\hspace{5cm}} \\
			подпись научного руководителя \\
		\end{flushright}
	\begin{flushright}
		
		Ассистент Ведюшкина Виктория Викторовна \\
		\underline{\hspace{5cm}} \\
		подпись научного руководителя \\
	\end{flushright}
		\hfill \break
		\end{center}

		\hfill \break
		\hfill \break
		\begin{center} Москва \\ 2020 \end{center}
		\thispagestyle{empty} 
		
		
		\newpage
		
		\tableofcontents 
		\newpage
		
		\newpage
	\section{Введение.}
	Математический биллиард --- динамическая система, опи\-сы\-вающая движение без трения материальной точки внутри области с абсолютно упругим отражением от границы (угол падения равен углу отражения). 
	В книге С.Л. Табачникова [1] дан обзор сов\-ремен\-ных исследований биллиардов. В настоящей работе ис\-следуют\-ся плоские биллиарды, ограниченные дугами софокусных квадрик, причем допускается, что области могут быть невыпуклыми, т.е. границы биллиардов могут содержать вершины углов $3\pi/2$. Интегрируемость таких биллиардов была замечена В.В. Козловым и Д.В. Трещевым в [3]: первый интеграл это полная энергия, сохраняющаяся в силу отсутствия трения, а дополнительный интеграл --- это параметр каустики. Каустика --- это софокусная квадрика семейства, обладающая тем свойством, что если траектория касается каустики в одной точке, то она обязана касаться той же каустики после отражения от границы биллиарда. 
	
	Цель данной работы --- исследовать топологию изоэнергетической поверхности биллиарда $Q^3$, неком\-пакт\-ного 3-многообразия пар вида $(x,v)$, где $x$ --- координата точки на плоскости, а $v$ --- единичный вектор скорости точки. Изоэнергетическое многообразие расслоено на совместные поверхности уровни интегралов системы, где почти все уровни являются регулярными, а конечное число уровней является критическими. Теорема Лиувилля [1] для биллиардов, ограниченных дугами софокусных квадрик без невыпулых углов на границе, гарантировала, что регулярные совместные поверхности уровня интегралов --- это двумерные торы, а В.В. Ведюшкиной [4 --- 7] были также исследованы и критические уровни слоения Лиувилля, которые описывались с помощью атомов А.Т. Фоменко [2]. Однако, в силу того, что в вершинах невыпуклых углов невозможно корректно определить биллиардное отражение, сохранив непрерывность системы, потоки в таких биллиардах уже не будут полны, а это приведет к невыполнимости условий теоремы Лиувилля, так как невозможно корректно определить отражение в вершине $3\pi/2$, сохранив непрерывность системы. В. Драгович и М. Раднович [8-12] представили описание всех регулярных совместных поверхностей уровня интегралов для невыпуклых биллиардов: для всех значений интеграла в таких биллиардах связная компонента совместной поверхности уровня будет сферой с ручками и проколами. 
	
	Теперь, для дальнейшего исследования топологии изоэнергетической поверхности $Q^3$ нужно описать все критические интегральные поверхности, а также, описать их окрестности, т.е. описать как именно критические поверхности уровня (не являющиеся сферами с ручками и проколами) приклеиваются к регулярным. Ранее автором было представлено полное описание топологии слоений Лиувилля в невыпуклых биллиардах с единственным углом $3\pi/2$ (см. [18]), а в работе [19] также автором был представлен алгоритм построения двумерных особых слоев. В настоящей работе исследована топология фазового многообразия в окрестностях критичес\-ких значений интеграла, а именно, дано описание трехмерных окрестностей двумерных комплек\-сов, являющихся прообразами критических значений дополнительного интеграла. Дальнейшее исследования результатов теории математических биллиардов и теории интег\-рируемых систем см. в [7---18]. 
	
	Первый тип критических уровней тради\-ционно возникает в биллиардных системах и при переходе через этот уровень меняется тип биллиардной каустики --- эллиптический на гиперболический. Второй тип критических уровней является уникальным для невыпуклых биллиардов и возникает, когда каустика проходит через вершины углов $3\pi/2$. Можно заметить, что при переходе через любой из бифуркационных уровней меняется топология совместных поверхностей уровня интег\-ралов: меняется количество и род поверхностей, лежащих в прообразах регулярных зна\-чений интеграла. В главе 5 описана топология окрес\-тностей бифуркационнных слоев в окрес\-тности не седлового уровня, а в главе 7 --- седлового. 
	
	Автор приносит благодарность А.Т. Фоменко за постановку задачи и внимание к работе, В.В. Ведюш\-киной --- за многочисленные ценные обсуждения.
	\subsection{Определение биллиарда.}
	Рассмотрим динамическую систему, описывающую движение (материальной) точки внут\-ри области $\varOmega$ с естественным отражением на границе $P = \partial \varOmega$. Эту систему назовём бил\-лиардом в области. Траектории, попавшие в прямые углы, мы доопределим, как обычно, по непре\-рывности (попадая в вершину прямого угла, точка отражается по той же траектории). Легко видеть, что поступить так же с траекториями, попавшими в вершину угла $3\pi/2$, сохраняя при этом непрерывность системы (свойство, что близкие траектории после отражения остаются близкими), невозможно: так как предел траекторий справа и предел траекторий слева не совпадает (см. рис. \ref{BoundRef}). Далее будем считать, что если траектория попала в вершину угла $3\pi/2$, то она заканчивается в этой вершине. Обозначим вершины углов $\pi/2$ через $x_t$, а углов $3\pi/2$ через --- $x_k$. Следовательно, некомпактным фазовым пространством данного биллиарда является многообразие 
	\begin{center}
		$M^4 := \{{(x, v)| x \in \varOmega, \ x \neq x_k \ \forall k, \ v \in  T^2_x\mathbb{R}, \ |v| > 0}/ \sim\}$,
	\end{center}
	где отношение эквивалентности задается так: \\
	$$(x_1 , v_1 ) \sim (x_2 , v_2 ) \Leftrightarrow x_1 = x_2 \in P, \
	|v_1 | = |v_2 |, \ v_1 - v_2 \bot T_{x_1} P$$
	$$(x_t, v_i) \sim (x_t, v) \Leftrightarrow |v| = |v_i|, v + v_i = 0 \ldotp $$ 
	Здесь через $T_x P$ обозначена касательная прямая к границе области $\varOmega$ в точке $x$, а через $|v|$ евклидова длина вектора $v$. Это отношение эквивалентности иногда будем называть биллиардным
	законом.
	 \begin{figure}[!ht]
		\centering
		\includegraphics[width=0.7\textwidth]{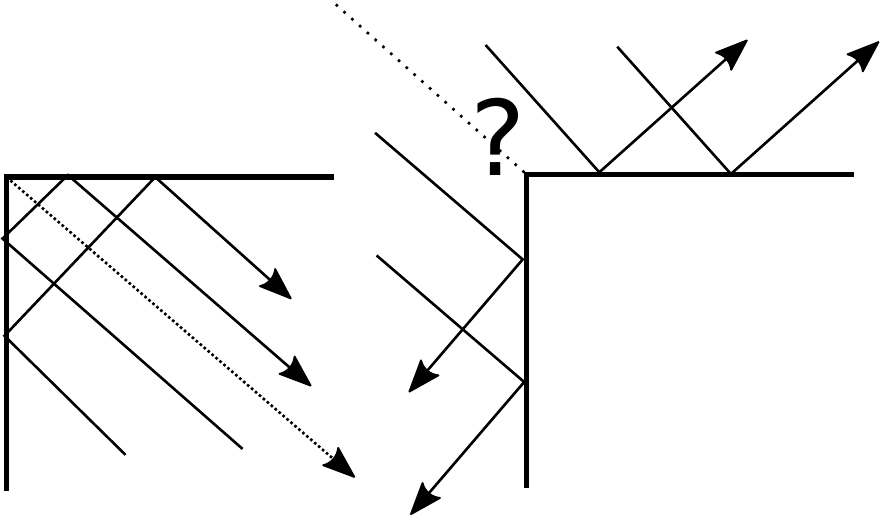}
		\caption{Отражение точки от вершины прямого угла (слева) и иллюстрация невозможности корректного продолжения траектории при отражении от вершины тупого угла (справа)} 
		\label{BoundRef}
	\end{figure}
	\begin{definition}
		Сложностью биллиарда назовем число $k$ --- число углов излома граничной кривой, равных $\frac{3\pi}{2}$.  
	\end{definition}
	
	\begin{definition}
		Кусочно-гладкое (некомпактное) 3-многообразие $$\tilde Q^3 = \{x\in M^4:|v(x)|= const\}$$ с краем назовем <<настоящей>> изоэнергетической 3-поверхностью данного биллиарда (см. главу 2.3).
	\end{definition}
	\begin{comment}
		Каждой точке биллиардной области $\varOmega$ соответствует окружность векторов скорости, за исключением вершин углов $\pi/2$ или $3\pi/2$. Подробное дока\-затель\-ство этого факта можно найти в [3].
	\end{comment}

	\section{Биллиардная область. Однородные биллиарды.}
	\subsection{Выбор биллиардной области.}
	Мы будем понимать под биллиардной областью $\varOmega$ односвязную область плоскости, ограни\-ченную дугами софокусных квадрик из семейства: 
	$$(b-\lambda)x^2+(a-\lambda)y^2=(a-\lambda)(b-\lambda),~\lambda \leq a. $$
	Здесь $\infty > a > b > 0$ --- фиксированная пара чисел (определяющая семейство софокусных
	квадрик), $\lambda$ --- параметр семейства (определяющий квадрику семейства).
	При $\lambda \in (0, a)$, где $\lambda \neq b$, --- это эллипсы или гиперболы. При
	$\lambda = b$ --- это объединение вырожденной гиперболы (образованной двумя горизонтальными лучами из фокусов) и вырожденного эллипса (отрезка между фокусами). Вертикальную прямую,
	соот\-вет\-ствующую параметру $\lambda = a$, мы будем называть гиперболой (а не вырожденной гиперболой), хоть по сути это вертикальный отрезок, но это не будет влиять на дальнейшие рассуждения. 
	Также потребуем, чтобы биллиард $\varOmega$ не содержал фокусов вне области или на ее границе.
	В дальнейшем под термином биллиард $\varOmega$ будет пониматься и динамическая система, и область одновременно. 
	\subsection{Интегрируемость.}
	Первым интегралом гамильтоновой системы называется постоянная вдоль траекторий функция. Мы ограничимся рассмотрением плоских биллиардов, ограниченных дугами софокусных квадрик, и интегрируемость таких биллиардов была замечена В.В. Козловым и Д.В. Трещевым [3]. Первый интеграл --- полная энергия, которая сохраняется в силу отсутствия трения в системе. Дополнительный интеграл --- это параметр каустики. Каустика --- это софокусная квадрика семейства, обладающая тем свойством, что если траектория касается каустики в одной точке, то она обязана касаться той же каустики после отражения от границы биллиарда. 
	Представим их формульную запись:
	$$ H = \frac{1}{2}(\dot{x}^2+\dot{y}^2)$$ (полная энергия)
	
	$$ \Lambda = \frac{\dot{x}^2}{a}+\frac{\dot{y}^2}{b} - \frac{(\dot{x}y-x\dot{y})^2}{ab}$$ (параметр каустики) 
	
	Здесь $a$ и $b$ --- параметры семейства софокусных квадрик (см. 2.1).

	\subsection{Пополнение изоэнергетической поверхности $Q^3$ особыми точками}
	Несмотря на невозможность определения корректного отражения в вершине угла $3\pi/2$, удобно рассматривать точки вида $(x_k,v) \in Q^3$, как настоящие, а не выколотые точки многообразия. Расслоим $\tilde Q^3$ на линии уровня функции $\varLambda = \alpha$ и пополним изоэнергетическую поверхность $\tilde Q^3$ следующим образом: на каждый уровень $\varLambda = \alpha$, где в вершину $x_k$ угла $3\pi/2$ попадает хотя бы одна траектория, добавим одну точку $(x_k,v_0)$, где $\varLambda(x_k, v_0) = \alpha$. Обозначим получившуюся поверхность через $Q^3$ и в дальнейшем будем изучать именно еe. 
	\begin{definition}
		Назовем естественной проекций $\pi$ многообразия $Q^3$ на биллиардную область $\varOmega$ гладкое отображение, действующее по правилу $(x,v) \rightarrow x$.	
	\end{definition}
	
	\begin{definition}
		Назовем особыми точками области (биллиарда) вершины углов $3\pi/2$, а особыми точками многообразия $Q^3$ --- прообразы при естественной проекции $\pi$ особых точек области. 
	\end{definition}
	\begin{comment}
	Вершины угла $\pi/2$ не будут считаться особыми точками биллиарда, так как в них задано корректно определённое по непрерывности отражение (см. рис \ref{BoundRef}, см. пункт 1.1). 
	\end{comment}	
	Рассмотрим интегрируемый биллиард $\varOmega$ c дополнительным интегралом $\varLambda$. Часто будет рассматриваться полный прообраз множества точек $A \subset \varOmega$ при естественной проекции $\pi$ и при фиксированных значениях интеграла $\varLambda(x,y,v_1,v_2) \in [c,d]$. Это множество задается как $$\pi^{-1}(A)|_{\varLambda \in [c,d]} = \{(x,v) \in Q^3 : x \in A, \varLambda(x,v) \in [c,d]\}.$$
	\subsection{Элементарные биллиарды.}
	Начнем с определения элементарного биллиарда --- биллиарда без особых точек, топо\-логия слоения Лиувилля для таких биллиардов была изучена В.В. Ведюшкиной в работе [\ref{V}]. 	
	\begin{definition}
		Рассмотрим компактный плоский (часть плоскости) односвязный бил\-лиард без особых точек (вершин угла $3\pi/2$), ограниченный дугами софокусных квадрик. Такой биллиард будем называть элементарным биллиардом.
	\end{definition}
	Приведем отношение эквивалентности на множестве элементарных биллиардов, данное в работах В.В. Ведюшкиной.
	\begin{definition}
		Элементарный биллиард $\varSigma$, ограниченный дугами квадрик из софокусного семейства, называется эквивалентным другому элементарному биллиарду $\varSigma'$, ограниченному дугами квадрик из того же семейства, если $\varSigma$ можно получить из $\varSigma'$ путем композиции перечисленных ниже преобразований.
		\begin{enumerate}
			\item Последовательное изменение сегментов границы путем непрерывной деформации в классе квадрик. Потребуем, чтобы во все время деформации сегмент границы лежал либо на софокусном эллипсе (т.е. их параметр квадрики, на которой он лежит, меняется в пределах $(-\infty, b)$), либо на софокусной гиперболе (т.е. параметры квадрики, на которой он лежит, меняется в
			пределах $(b, a]$), либо является отрезком фокальной прямой (т.е. параметр софокусной квадрики, на которой он лежит, равен $b$ во все время деформации);
			\item Симметрия относительно оси семейства;
			\item Объединение нескольких простейших элементарных биллиардов в один или же путем разбиения одного биллиарда на более мелкие (объединение и разбиение как подмножеств $\mathbb{R}^2$).
		\end{enumerate}
		
	\end{definition}
	Также В.В. Ведюшкиной элементарные биллиарды были классифицированы с точностью до указанного выше отношения эквивалентности. 
	\begin{statement}[В.В. Ведюшкина [\ref{V}]
	Любой односвязный элементарный биллиард $\varSigma$ эквивалентен биллиарду, принадлежащему одному из следующих двух серий (все они представлены на рисунках ):
		\begin{enumerate}
		\item Односвязные элементарные биллиарды содержащие
		отрезок фокальной прямой между фокусами (внутри биллиарда или на границе). Существует ровно шесть типов. Представители всех классов этой серии изображены на рисунке \ref{A}. Для каждого класса укажем $f$ --- количество фокусов, принадлежащих области, $f'$
		--- число фокусов принадлежащих границе биллиарда. Такие биллиарды будем обозначать $A_f$, если их граница не содержит отрезок фокальной прямой, и $A'_f$ иначе.
		\item Односвязные элементарные 	биллиарды не содержащие отрезка фокальной прямой между фокусами. Каждый такой биллиард ограничивает четырёхугольник, состоящий из дуг двух эллипсов и двух гипербол (быть может совпадающих). Существует ровно шесть типов. Представители всех классов этой серии изображены на рисунке \ref{B}. Такие биллиарды будем обозначать либо $B_n$, либо $B'_n$, либо $B''_n$ в зависимости от того, ноль, один или два отрезка границы лежат на фокальной прямой, где $n$ --- это количество связных компонент фокальной прямой в биллиарде. Будем называть их биллиардами типа $B$. 
		\end{enumerate}
		При этом области, принадлежащие к различным сериям ($A$ или $B$) неэквивалентны
		между собой, а также неэквивалентны между собой внутри каждой серии области с различными индексами.
	
	\end{statement}
	 \begin{figure}[!ht]
	\centering
	\includegraphics[width=0.7\textwidth]{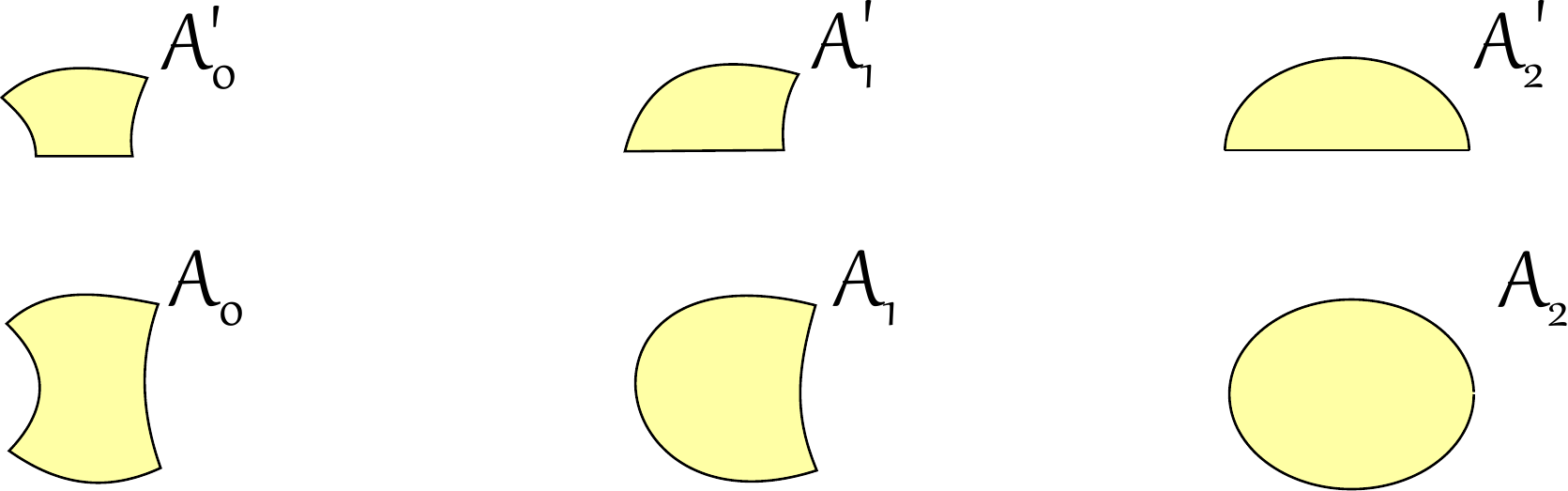}
	\caption{Элементарные биллиарды серии $A$.} 
	\label{A}
\end{figure}
	 \begin{figure}[!ht]
	\centering
	\includegraphics[width=0.7\textwidth]{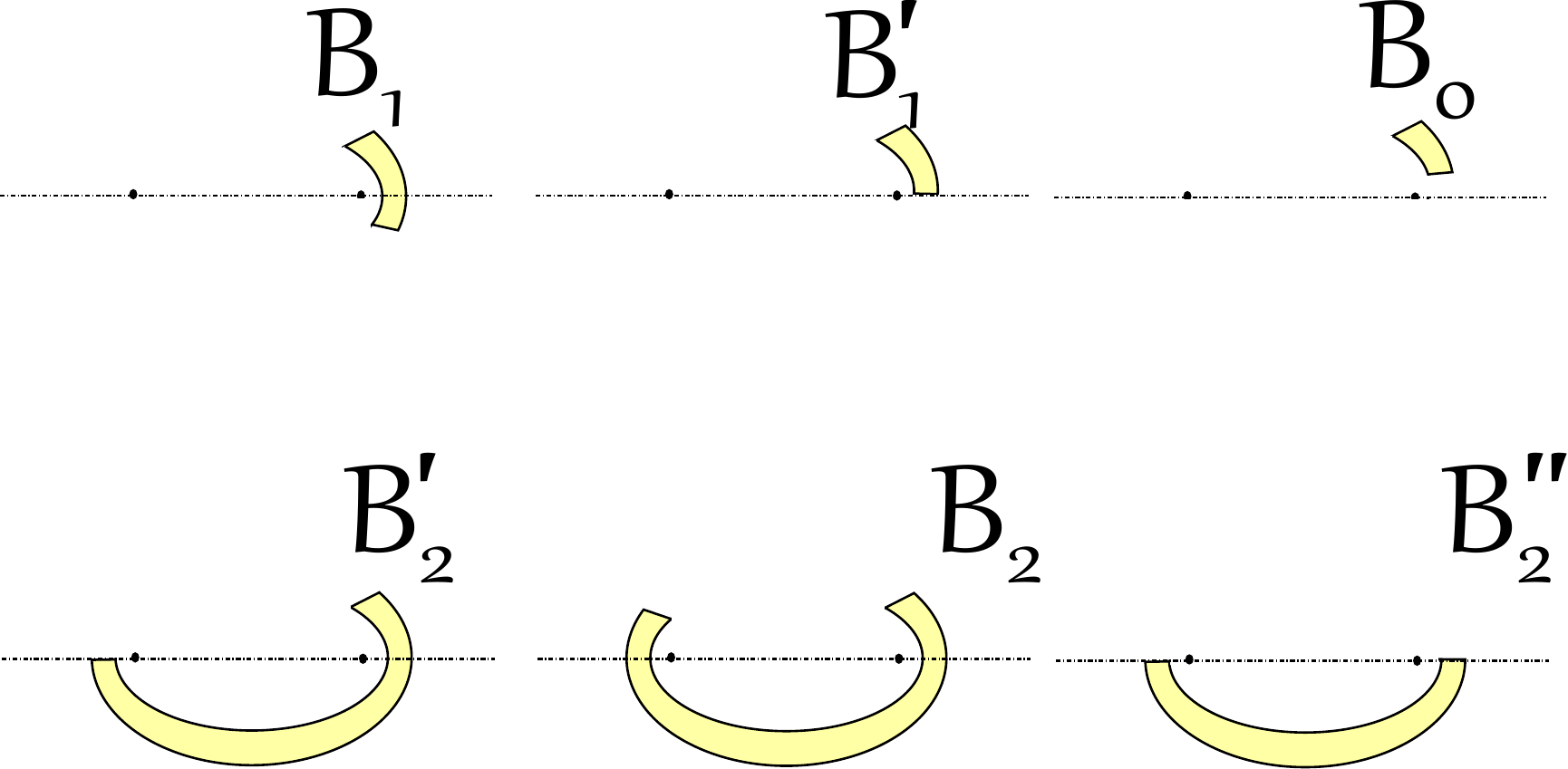}
	\caption{Элементарные биллиарды серии $B$.} 
	\label{B}
\end{figure}
	\subsection{Однородные биллиарды.}
	Наличие внутри биллиарда сегментов фокальной прямой одновременно вне и внутри фокусов значительно усложняет топологию бифуркационных уровней. Поэтому мы сузим класс рассматриваемых биллиардов до однородных биллиардов.
	\begin{definition}\label{DefHom}
		Рассмотрим плоский (не обязательно элементарный) биллиард $\varOmega$ произ\-вольной сложности $k$. Если биллиард $\varOmega$ не содержит участка фокальной прямой между фокусами (сегментов вырожденного эллипса), то назовем его однородно-эллиптическим. Если любой связный сегмент фокальной прямой, содержащийся в биллиарде $\varOmega$, лежит между фокусами, то назовем его однородно-гипер\-бо\-личес\-ким.  В ином случае, будем называть его неод\-нородным. Биллиард $\varOmega$, который не пересекает фокальную прямую, будем считать и однородно-гиперболическим, и однородно-эллиптическим (см. рис. \ref{HomNonHom}).
	\end{definition}

	\begin{comment}
	Однородно-эллиптическими элементарными биллиардами являются все биллиарды серии $B$, однородно-гиперболическими элементарными биллиардами --- биллиарды $A_0$, $A'_0$ и $B_0$. Остальные элементарные биллиарды неоднородны (элементарный биллиард $B_0$ одновременно однородно-эллиптический и однородно-гиперболический, так как не пере\-секает фокальную прямую). 
	\end{comment}

		\begin{figure}[!ht]
		\centering
		\includegraphics[width=0.6\textwidth]{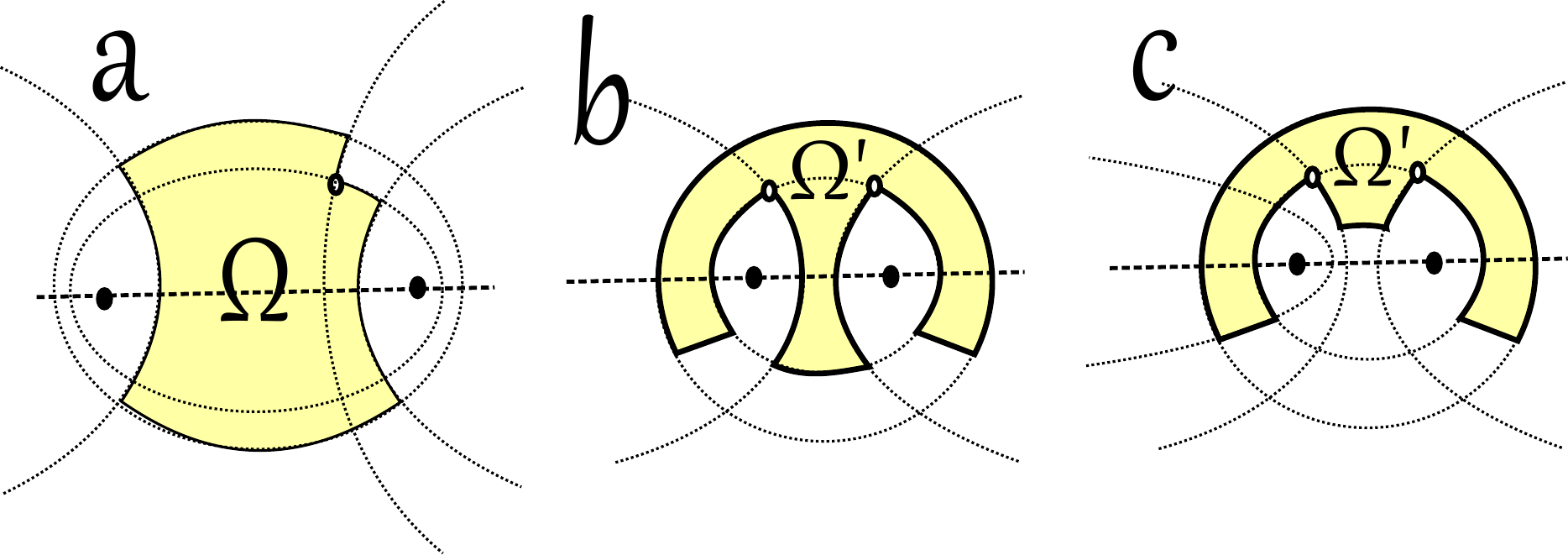}
		\caption{а --- пример однородно-гиперболического биллиарда, б --- пример неоднородного биллиарда, с --- пример однородно-эллиптического биллиарда}
		\label{HomNonHom}
	\end{figure}

	\section{Известные результаты теории биллиардов.}
	\subsection{Описание топологии слоений Лиувилля для элементарных биллиар\-дов.}
	
	Начнем с определения критических  и регулярных значений дополнительного интеграла $\varLambda$. 
	
	Зафиксируем $H = const$ и ограничим наше многообразие $M^4$ на изоэнергетическую поверхность $Q^3$. Тогда, в случае элементарных биллиардов у слоения Лиувилля было три особых уровня: $\varLambda = 0$, $\varLambda = b$ и $\varLambda = a$. Топология $Q^3$ в элементарных биллиардах менялась только при переходе дополнительного интеграла через эти значения. А в теории биллиардов с невыпуклыми углами к ним также добавятся $2k$ новых особых значений дополнительного интеграла $\varLambda$ при переходе через которые топология $Q^3$ изменится, где $k$ --- сложность биллиарда (см. опр. 1), а именно, число углов излома граничной кривой биллиарда, равных $3\pi/2$.
	 
	\begin{definition}
	Фиксируем биллиард $\varOmega$. Граница области $\varOmega$ образована сегментами эллипсов и гипербол семейства. Если эти сегменты границы не содержат особых точек, то обозначим через $ $min$_i$ и $ $max$_j$ значения параметра $\varLambda$, которым соответствуют эллиптические и гиперболические сегменты границы области $\varOmega$ такого типа. Если область $\varOmega$ имеет непустое пересечение с прямой Oy, то дополним набор $ $max$_j$ значением $a$. Значение параметра $\varLambda$ сегментов границы, на которых лежат особые точки, обозначим через $\lambda_I$. 

	Назовем особыми следующие значения интеграла $\varLambda$: 
	
	\begin{enumerate}
	\item (Локально) минимальные значения интеграла $\varLambda = ~$min$_i$; 
	
	\item Седловое значение интеграла $\varLambda = b$;
	
	\item (Локально) максимальные значения интеграла $\varLambda = ~$max$_j$; 
	
	\item Значения $\varLambda = \lambda_i$, где $1 \leq i \leq 2k$ (так как каждая особая точка лежит либо на пересечении эллипса и гиперболы семейства, либо гиперболы и вырожденного эллипса).
	\end{enumerate}
	\end{definition}

	B. B. Ведюшкиной [\ref{V}] были построены инварианты Фоменко-Цишанга для элементарных биллиардов, приведем только часть ее результата --- описание грубых молекул. 
	\begin{theorem}[В.В. Ведюшкина]\label{thVed}
		Прообраз $b-\varepsilon \leq \varLambda \leq b+\varepsilon$ в изоэнергетической поверхности $Q^3$ элементарного биллиарда $\varSigma$ при достаточно малом значении $\varepsilon$ гомеоморфен следующим трёхмерным многообразиям (перечисленные ниже атомы трехмерные):
		\begin{enumerate}
			\item атом B для областей $A_2$, $A_0$;
			\item атом $A^{*}$
			для области $A_1$;
			\item атом $B_n$ для $B_n$, 
			$B_{n+1}^{'}$, где $n > 0$;
			\item произведение тора на отрезок для областей $A'_2$, $A'_1$, $A'_0$, $B_0$, $B'_1$, $B''_2$;
		\end{enumerate}
	\end{theorem}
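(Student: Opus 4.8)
\noindent\emph{План доказательства.} План состоит в том, чтобы перейти к софокусным (эллиптическим) координатам $(\lambda_1,\lambda_2)$, в которых биллиард разделяется, и свести утверждение к анализу одномерной редуцированной системы около фокального значения $\lambda=b$. Сначала я бы, следуя работам В.В. Ведюшкиной [\ref{V}], записал, что элементарный биллиард $\varSigma$, ограниченный дугами софокусных квадрик, в координатах $(\lambda_1,\lambda_2)$ переходит в криволинейный прямоугольник со сторонами на линиях $\lambda_i=\mathrm{const}$ (где $\lambda_1\in[b,a]$ --- <<гиперболическая>> координата, $\lambda_2\leq b$ --- <<эллиптическая>>), а переменные $\lambda_1,\lambda_2$ эволюционируют независимо, каждая по своему одномерному уравнению $\dot\lambda_i^{\,2}=\varphi_i(\lambda_i;H,\alpha)$, где $\alpha=\varLambda$ и $H$ входят как параметры (задающие точки поворота), причём каждое отражение от дуги границы и каждое отождествление $(x_t,v)\sim(x_t,-v)$ в вершине угла $\pi/2$ действует внутри одного из сомножителей. По теореме Лиувилля (для элементарных биллиардов она применима) регулярный совместный уровень $\{H=\mathrm{const},\ \varLambda=\alpha\}$ --- это тор $S^1_{\lambda_1}\times S^1_{\lambda_2}$.

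Далее я бы локализовался у $\alpha=b$. Значение $\lambda=b$ отвечает вырожденной квадрике --- отрезку между фокусами вместе с двумя лучами из фокусов. При $\alpha\to b$ ровно у одной из двух одномерных систем --- той координаты, чей отрезок движения упирается в $b$, --- двучлен $(\lambda_i-b)$ входит в $\varphi_i$ квадратично, так что $\lambda_i=b$ становится неподвижной точкой (седлом) одномерного потока, а вторая система остаётся регулярной. Поэтому трёхмерная окрестность $\pi^{-1}(\varSigma)|_{|\varLambda-b|\leq\varepsilon}$ в $Q^3$ гомеоморфна расслоению на окружности (идущие от второго, регулярного, сомножителя) над двумерным атомом --- окрестностью особого слоя этой одномерной системы; для атомов $B$ и $B_n$ это расслоение тривиально (прямое произведение на $S^1$), для $A^*$ --- нет.

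Остаётся определить тип двумерного атома; он считывается из взаимного расположения $\varSigma$ и фокальной прямой и из числа связных компонент их пересечения. Если отрезок между фокусами (вместе с лучами) не попадает во внутренность $\varSigma$ --- он целиком лежит на границе, либо $\varSigma$ вообще не пересекает фокальную прямую, --- то у соответствующей одномерной системы особой точки при $\lambda=b$ нет, особый слой остаётся окружностью, и окрестность гомеоморфна $T^2\times[0,1]$; сюда попадают $A'_2$, $A'_1$, $A'_0$, $B_0$, $B'_1$, $B''_2$. Если во внутренности $\varSigma$ лежит ровно одна компонента фокального множества <<полного>> вида (оба её конца --- на граничных дугах семейства), то одномерный особый слой --- восьмёрка, т.е. двумерный атом --- $B$; это $A_2$ и $A_0$. Если такая компонента одна, но один её конец --- это фокус, расположенный <<на краю>> одномерной конфигурационной области, то возникает $\mathbb{Z}_2$-факторизация восьмёрки и соответствующий полуатом со звёздной вершиной $A^*$; это $A_1$. Наконец, если пересечение $\varSigma$ с фокальной прямой состоит из $n>0$ компонент вне фокусов, то одномерная система имеет $n$ седел и двумерный атом --- это $B_n$; это $B_n$ и $B'_{n+1}$.

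Главной трудностью будет аккуратный разбор этих двенадцати классов областей. Во-первых, нужно корректно описать поведение траектории, касающейся вырожденной каустики $\lambda=b$ (в частности, выбор ветви --- отрезок между фокусами или луч из фокуса), и убедиться, что ни отражения от дуг границы, ни доопределение по непрерывности в вершинах $\pi/2$ не добавляют лишних особых точек. Во-вторых, надо точно подсчитать число критических окружностей на особом слое $\{\varLambda=b\}$ и проследить за действием инволюции --- именно это различает $B$, $B_n$ и $A^*$. Технически удобнее всего выписать редуцированное уравнение $\dot\lambda^{\,2}=\varphi(\lambda;H,b)$ около $\lambda=b$, найти его неподвижные точки и сепаратрисы, а затем восстановить трёхмерную окрестность как расслоение на окружности над двумерным атомом; оставшееся --- рутинная проверка по таблице из двенадцати областей.
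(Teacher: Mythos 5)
Сопоставить ваш план с доказательством из текста работы нельзя: эта теорема приведена здесь как цитируемый результат В.В. Ведюшкиной [\ref{V}], собственного доказательства в работе нет. Поэтому оцениваю набросок сам по себе. Предложенный вами путь --- софокусные координаты, разделение переменных, анализ одномерной редуцированной системы около $\lambda=b$ и восстановление трёхмерной окрестности как расслоения на окружности над двумерным атомом --- законен и ближе по духу к анализу динамики у Драговича--Раднович; оригинальный же метод (который настоящая работа как раз и распространяет на невыпуклые области) устроен иначе: совместные поверхности уровня описываются напрямую через естественную проекцию $\pi$ на биллиардную область как склейки экземпляров области возможного движения, оснащённых допустимыми направлениями скорости, и перестройка при переходе через $\varLambda=b$ считывается с поведения этих склеек на фокальной прямой. Ваш вариант даёт привычную механическую картину (седло одномерного потока, сепаратрисы), метод Ведюшкиной --- прямой контроль над проекцией слоёв на стол, что и используется дальше в тексте.

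Как доказательство ваш текст пока не закрыт: вся различающая случаи работа отнесена к <<рутинной проверке>>, тогда как именно в ней содержание теоремы. Во-первых, в фокусах софокусные координаты вырождаются ($\lambda=b$ --- общее граничное значение обеих координат, и при $\varLambda\to b$ точка поворота сливается именно с этим общим значением), так что утверждение <<ровно одна из двух одномерных систем приобретает седло>> и структура прямого произведения вблизи фокуса требуют отдельного обоснования; исключительный слой атома $A^*$ в случае $A_1$ и его отличие от атома $B$ в случае $A_2$ возникают ровно из этой вырожденности, и ссылки на <<$\mathbb{Z}_2$-факторизацию восьмёрки>> здесь недостаточно. Во-вторых, тривиальность расслоения на окружности в случаях $B$ и $B_n$ (т.е. что 3-атом есть произведение двумерного атома на $S^1$) нужно доказывать, а не постулировать. В-третьих, критерий подсчёта сёдел должен быть сформулирован точно: атом $B_n$ отвечает числу компонент фокальной прямой, лежащих строго внутри области, а граничные компоненты седла не дают --- именно поэтому $B'_{n+1}$ даёт $B_n$, тогда как $B'_1$ и $B''_2$ дают $T^2\times I$; у вас это сказано лишь неявно.
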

	\subsection{Описание регулярных слоев невыпуклых биллиардов.}
	Предоставим описание регулярных слоёв интеграла для биллиардов с $k \neq 0$. Ранее описание регулярных слоёв биллиардов произвольной сложности возникало в работах В. Драгович и М. Раднович [4-6], эти работы бази\-руются на анализе динамики интегральных траекторий. 
	
	\begin{theorem}[В. Драгович, М. Раднович]\label{thDR}
	Для всех неособых значений интеграла повер\-хность уровня интеграла $\varLambda$
	в изо\-энер\-ге\-тичес\-кой поверхности $Q^3$ биллиарда $\varOmega$ гомео\-морфна объединению поверхностей рода $(k'+1)$, где $k'$ --- количество особых точек вне интегрального эллипса, если $\varLambda<b$, или внутри интег\-ральной гиперболы, если $\varLambda>b$ (в <<настоящей>> изоэнергетической поверхности $\tilde Q^3$ на каждом уровне будет выколото $k'$ точек).
	\end{theorem}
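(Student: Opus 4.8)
\textit{Набросок доказательства.} Зафиксируем регулярное значение $\alpha$ интеграла $\varLambda$ и перейдём к конфокальным (лиувиллевым) координатам $(\lambda_1,\lambda_2)$, где $\lambda_1$ --- параметр софокусной гиперболы через точку, а $\lambda_2$ --- параметр софокусного эллипса через точку. Траектория с параметром каустики $\alpha<b$ касается эллипса $\{\lambda=\alpha\}$ и лежит целиком вне него, так что образ соответствующего слоя при проекции $\pi$ есть $D_\alpha:=\varOmega\cap\{\lambda_2\le\alpha\}$; при $\alpha>b$ траектория касается гиперболы $\{\lambda=\alpha\}$ и лежит между её ветвями, то есть $D_\alpha:=\varOmega\cap\{\lambda_1\ge\alpha\}$. В координатах $(\lambda_1,\lambda_2)$ все дуги границы $\partial\varOmega$ и дуга каустики --- отрезки координатных прямых (эллиптические дуги отвечают $\lambda_2=\mathrm{const}$, гиперболические --- $\lambda_1=\mathrm{const}$), поэтому $D_\alpha$ --- прямоугольный многоугольник; при этом те вершины $x_k$ углов $3\pi/2$ биллиарда $\varOmega$, которые попали в $D_\alpha$ (для $\alpha<b$ --- лежащие вне интегрального эллипса, для $\alpha>b$ --- внутри интегральной гиперболы; число таких вершин и есть $k'$), --- это в точности невыпуклые вершины многоугольника $D_\alpha$, тогда как разрез каустикой и сами граничные дуги порождают лишь выпуклые прямые углы. (Для биллиардов, симметричных относительно оси семейства, конфокальная карта многозначна и $D_\alpha$ надо понимать как соответствующий фактор; это не влияет на дальнейшее.)

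Первый шаг --- представить слой как <<удвоение>> $D_\alpha$. По теореме разделения переменных Якоби--Лиувилля на уровне $\{H=\mathrm{const},\ \varLambda=\alpha\}$ система в конфокальных координатах расщепляется: каждая координата $\lambda_i$ монотонно колеблется между двумя поворотными значениями (одно --- параметр $\alpha$ или $b$ со стороны каустики, другое --- экстремальный параметр граничной квадрики), причём в поворотной точке биллиардное отражение (или касание каустики) меняет знак производной $\dot\lambda_i$, и только его, ибо касательная к граничной квадрике направлена вдоль другой координаты. Поэтому слой в $Q^3$ --- это склейка четырёх экземпляров области $D_\alpha$, занумерованных парой знаков $(\operatorname{sgn}\dot\lambda_1,\operatorname{sgn}\dot\lambda_2)$ и отождествляемых вдоль граничных дуг по правилу смены соответствующего знака; получаемая замкнутая поверхность $S$ ориентируема (ориентацию задаёт произведение знаков вместе с ориентацией $D_\alpha$), а точка $(x_k,v_0)$, добавляемая при переходе $\tilde Q^3\to Q^3$, --- это ровно точка $S$ над вершиной $x_k$ (в $\tilde Q^3$ она выколота). При $k'=0$ многоугольник $D_\alpha$ --- прямоугольник, склейка даёт тор, что согласуется с теоремой~\ref{thVed} и теоремой Лиувилля.

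Второй шаг --- подсчёт эйлеровой характеристики. Возьмём клеточное разбиение $S$, поднятое с разбиения $D_\alpha$ на вершины, стороны и внутренность. Тогда $F_S=4$; каждая сторона $D_\alpha$ даёт в $S$ ровно две стороны, так что $E_S=2E_{D_\alpha}$; в каждой вершине $D_\alpha$ встречаются одна эллиптическая и одна гиперболическая стороны, поэтому все четыре экземпляра склеиваются там в одну точку, и $V_S=V_{D_\alpha}$. Так как $D_\alpha$ стягиваемо, $E_{D_\alpha}=V_{D_\alpha}$, откуда $\chi(S)=V_{D_\alpha}-2E_{D_\alpha}+4=4-V_{D_\alpha}$. У прямоугольного многоугольника число выпуклых вершин на $4$ больше числа невыпуклых, поэтому $V_{D_\alpha}=2k'+4$ и $\chi(S)=-2k'$; будучи замкнутой ориентируемой, $S$ гомеоморфна сфере с $k'+1$ ручками. Выкалывая $k'$ точек над невыпуклыми вершинами, получаем слой в $\tilde Q^3$ --- сферу с $k'+1$ ручками и $k'$ проколами.

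Наконец, число компонент слоя равно числу связных компонент $D_\alpha$ (траектория с данным параметром каустики связна и не покидает своей компоненты), а над каждой компонентой поверхность связна, так как её граница несёт и эллиптическую, и гиперболическую дугу и две смены знака порождают всю группу $(\mathbb Z/2)^2$; остаётся проверить, что для рассматриваемых односвязных биллиардов все $k'$ особых точек лежат над одной компонентой, --- тогда слой гомеоморфен объединению поверхностей рода $k'+1$ с $k'$ проколами в $\tilde Q^3$. Случай $\alpha>b$ разбирается дословно той же схемой с заменой ролей $\lambda_1\leftrightarrow\lambda_2$; можно также вести индукцию по $\alpha$: тип слоя постоянен между соседними особыми значениями, при $k'=0$ это тор, а при переходе $\alpha$ через особое значение $\lambda_i$, отвечающее очередной вершине $x_k$ (бифуркация второго типа), $k'$ растёт на единицу и слой приобретает ручку и прокол. Главная трудность сосредоточена в локальном анализе около вершины угла $3\pi/2$: надо проверить, что в конфокальных координатах такая вершина даёт ровно одну точку поверхности $S$, гладкую как точка $Q^3$ (четыре сектора раствора $3\pi/2$ склеиваются циклически в конус угла $6\pi$, а конус --- топологически диск), так что её выкалывание --- единственный дефект слоя в $\tilde Q^3$; сюда же относится разбор вырожденных мест, где конфокальная карта неприменима (фокальная прямая $\lambda=b$ и прямая $\lambda=a$), --- для однородных биллиардов из п.~2.5 он проходит ценой перебора случаев.
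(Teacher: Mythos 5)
Прежде всего: в самой работе эта теорема не доказывается --- она приведена как цитируемый результат В.~Драговича и М.~Раднович ([8---12], со ссылкой на анализ динамики траекторий), так что сопоставлять ваш текст с <<доказательством из работы>> не с чем. Ваша схема --- конфокальные координаты, представление слоя как склейки четырёх экземпляров области $D_\alpha$ по граничным дугам и каустике, подсчёт эйлеровой характеристики --- это разумный самостоятельный путь, по духу близкий к подсчёту числа векторов скорости над точками области, который в работе используется при доказательстве Утверждения 3 и Теоремы 3.

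Тем не менее в наброске есть два конкретных пробела. Во-первых, выкладка $\chi(S)=4-V_{D_\alpha}$ вместе с равенством $V_{D_\alpha}=2k'+4$ опирается на то, что каждая компонента $D_\alpha$ --- диск; вы это лишь постулируете (<<$D_\alpha$ стягиваемо>>). Для однородных биллиардов это верно, но требует аргумента (если бы каустический эллипс целиком лежал внутри $\varOmega$, область содержала бы отрезки фокальной прямой и между фокусами, и вне их, что противоречит однородности), а вы однородность нигде не привлекаете; вне этого класса утверждение ложно: уже для области $A_2$ (полный эллипс) при $\varLambda<b$ область $D_\alpha$ --- кольцо, слой --- два тора над одной связной компонентой $D_\alpha$ (что опровергает и ваше заявление <<число компонент слоя равно числу компонент $D_\alpha$>>), а для эллипса с <<вырезом>>, ограниченным софокусными дугами вдали от фокальной прямой, формула $\chi(S)=4-V_{D_\alpha}$ даёт $0$ вместо правильного значения $-2k'=-4$; кольцевой случай нужно разбирать отдельно (для самой теоремы он безобиден, но в вашем тексте не покрыт). Во-вторых, нумерация четырёх листов знаками $(\operatorname{sgn}\dot\lambda_1,\operatorname{sgn}\dot\lambda_2)$ не глобальна, как только внутренность $D_\alpha$ пересекает фокальную прямую или ось $Oy$: там соответствующий знак меняется без отражения и без какой-либо склейки листов. Для $\varLambda>b$ в однородно-гиперболическом биллиарде это типичная ситуация (область возможного движения содержит фокальный отрезок), а не вырожденный частный случай, поэтому листы следует нумеровать инвариантно (например, направлением обхода и положением точки касания каустики впереди или позади по ходу движения) и лишь затем проверять, что склейки по эллиптическим и гиперболическим дугам устроены именно так, как вы пишете; вы сами откладываете этот разбор в последнем абзаце, но без него конструкция первого шага не определена. Оба пробела устранимы стандартными средствами, однако именно на них приходится содержательная часть доказательства; остальное (подсчёт $\chi$ для дисковых компонент, ориентируемость, локальный анализ вершины $3\pi/2$ как конусной точки) при этих поправках проходит.
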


	Любая регулярная поверхность уровня для произвольного биллиарда $\varOmega$ будет гомео\-морфна сфере с некоторым количеством ручек и проколов (будем обозначать такую сферу через $G^{(k'+1)}_2$) и как видно из теоремы \ref{thDR}, род этой поверхности меняется при изменении $k'$ --- числа особых точек в области возможного движения. Таким образом, сформулируем ключевую идею изучения топологии слоения Лиувилля для невыпуклых биллиардов --- на критических значениях интеграла ($\Lambda = b$ и $\Lambda = \lambda_i$) меняется $k'$ и, как следствие теоремы \ref{thDR}, меняется топология совместных поверхностей уровней $\varLambda = \alpha$ (см. рис. \ref{RegFib}, рис. \ref{SingFib}).   
	\begin{figure}[!ht]
		\centering
		\includegraphics[width=0.5\textwidth]{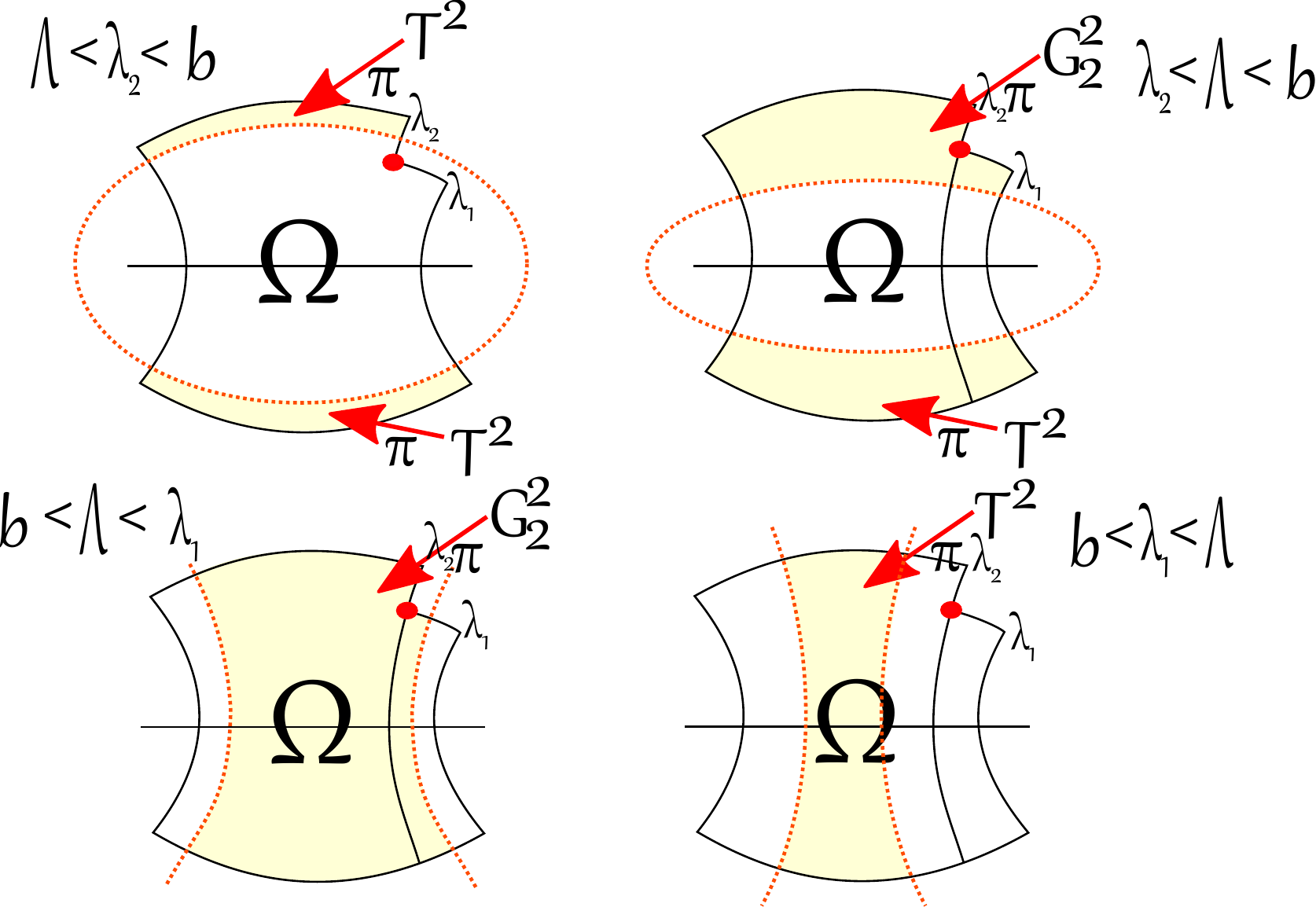}
		\caption{Области возможного движения при различных регулярных значениях интеграла и их прообразы в $Q^3$.}
			\label{RegFib}
	\end{figure}
	\begin{figure}[!ht]
		\centering
		\includegraphics[width=0.5\textwidth]{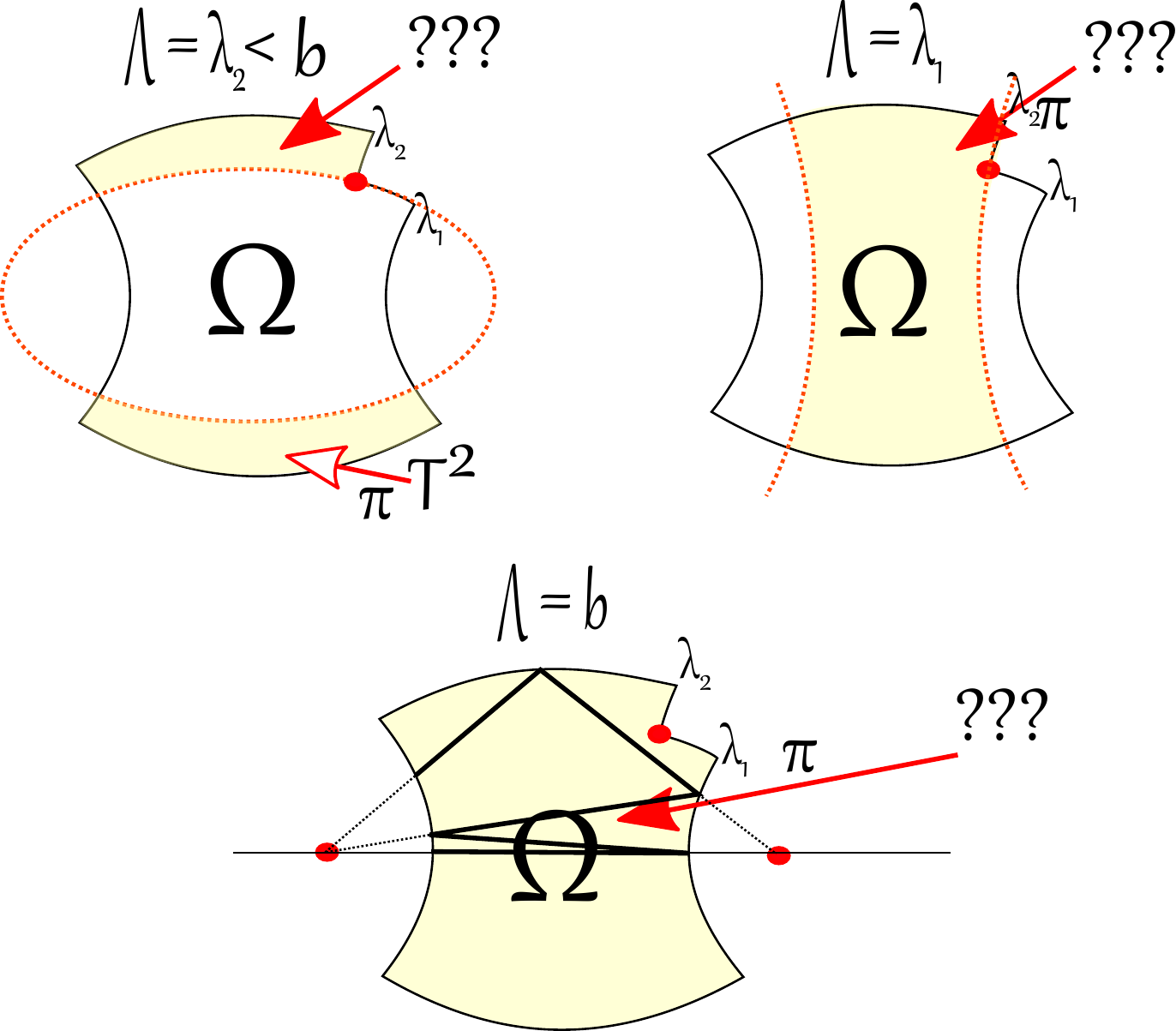}
		\caption{Области возможного движения при критических значениях интеграла.}
		\label{SingFib}
	\end{figure}
\section{Разбиение однородного биллиарда на элементарные.}
\subsection{Разбиение однородного биллиарда}
Главной идеей изучения топологии многообразия $Q^3$ будет разрезание этого многообразия на более мелкие и простые части. Для этого мы сначала разобьем биллиард $\varOmega$ на элементарные биллиарды $\varSigma_1, \ldots, \varSigma_N$ специальным образом, а потом <<поднимем>> это разбиение на уровень многообразия $Q^3$ с помощью естественной проекции $\pi: Q^3 \rightarrow \varOmega$, где $\pi(x,v) = x$. Так как топология окрестности особого слоя для элементарных биллиардов уже изучена, то мы сможем использовать это для построения топологии $Q^3$.    

Представим способ выбора разбиения на однородном биллиарде: будем разрезать по одному типу квадрик, которые проходят через особые точки. 
\begin{definition}\label{PartH}
	Рассмотрим одно\-родно-гиперболический биллиард $\varOmega$ произвольной слож\-ности $k$. Будем выбирать на нем разбиение $\varSigma_1, \ldots, \varSigma_N$ следующим образом: 
	проведем все, в том числе и вырожденную, гиперболы на которых лежат особые точки (вершины углов $3\pi/2$).
	
	В результате биллиард $\varOmega$ разобьётся на не более чем $2k$ элементарных биллиардов $\varSigma$ (несколько особых точек могут лежать на одной гиперболе (эллипсе). 
	
	Назовем такое разбиение однородного биллиарда разбиением $\varSigma_1, \ldots, \varSigma_N$, числом $N$ будем обозначать количество элементов в нем, а через $n$ обозначим количество гипербол $\lambda_i$ по которым совершались разрезы.
\end{definition}
\begin{definition}\label{PartE}
	Рассмотрим одно\-родно-эллиптический биллиард $\varOmega$ произвольной слож\-ности $k$. Будем выбирать на нем разбиение $\varSigma_1, \ldots, \varSigma_N$ следующим образом: 
	проведем все, в том числе и вырожденный, эллипсы на которых лежат особые точки (вершины углов $3\pi/2$).
	
	В результате биллиард $\varOmega$ разобьётся на не более чем $2k$ элементарных биллиардов $\varSigma$ (несколько особых точек может лежать на одном эллипсе). 
	
	Назовем такое разбиение однородного биллиарда разбиением $\varSigma_1, \ldots, \varSigma_N$, числом $N$ будем обозначать количество элементов в нем, а через $n$ обозначим количество эллипсов $\lambda_i$ по которым совершались разрезы.
\end{definition}

В обоих определениях $N > n$, так как разрез по каждой квадрике добавляет хотя бы один новый элементарный биллиард $\varSigma_j$ в разбиение.
\subsection{Области возможного движения}
Выше мы выбрали разбиение всего биллиарда $\varOmega$ на элементарные биллиарды. Однако, биллиардное движение не обязательно происходит внутри целого биллиарда, т.е. при некоторых значениях дополнительного интеграла $\varLambda = \alpha$ некоторые точки $x \in \varOmega$ могут не оснащаться ни одним вектором скорости. Подробнее, фиксируем значение дополнительного интеграла $\varLambda = \alpha$ и рассмотрим такие точки $x \in \varOmega$ биллиарда $\varOmega$, в которых существует такой вектор скорости $v$, чтобы пара $(x,v)$ лежала в $Q^3$ и $\varLambda(x,v) = \alpha$. В силу теоремы Якоби-Шаля [1] в биллиардах существует три типа областей возможного движения в зависимости от $\alpha$: 
\begin{enumerate}
	\item Если $\alpha < b$, то область возможного движения заполняет биллиард $\varOmega$ вне эллипса семейства с параметром $\lambda = \alpha$;
	\item Если $\alpha = b$, то область возможного движения заполняет весь биллиард $\varOmega$;
	\item Если $\alpha > b$, то область возможного движения заполняет биллиард $\varOmega$ внутри гиперболы семейства с параметром $\lambda = \alpha$.
\end{enumerate}
\subsection{Разбиение областей возможного движения}
Теперь разобьем не только целый однородный биллиард $\varOmega$ на элементарные биллиарды, но и все возможные области возможного движения, возникающие в биллиарде $\varOmega$ при фиксировании значений дополнительного интеграла $\varLambda = \alpha$. Очевидно, что в таком случае разрезы будут проводиться по некоторому подмножеству квадрик $\lambda_{i_1}, \ldots, \lambda_{i_n} \subseteq \lambda_1, \ldots, \lambda_n$, так как некоторые квадрики $\lambda_1, \ldots, \lambda_n$ могут оказаться вне области возможного движения. Таким образом, зафиксировав разбиение однородного биллиарда $\varOmega$ на элементарные биллиарды $\varSigma_1, \ldots, \varSigma_N$, мы фик\-си\-руем также и разбиение всех возможных областей движения на элемен\-тарные биллиарды $\varSigma'_1, \ldots, \varSigma'_{N'}$, где каждый элемен\-тарный биллиард $\varSigma'_1, \ldots, \varSigma'_{N'}$ эквивалентен некоторому биллиарду из множества $\varSigma_1, \ldots, \varSigma_N$ (см. рис. \ref{PosMotion}).

\begin{figure}[!ht]
	\centering
	\includegraphics[width=0.5\textwidth]{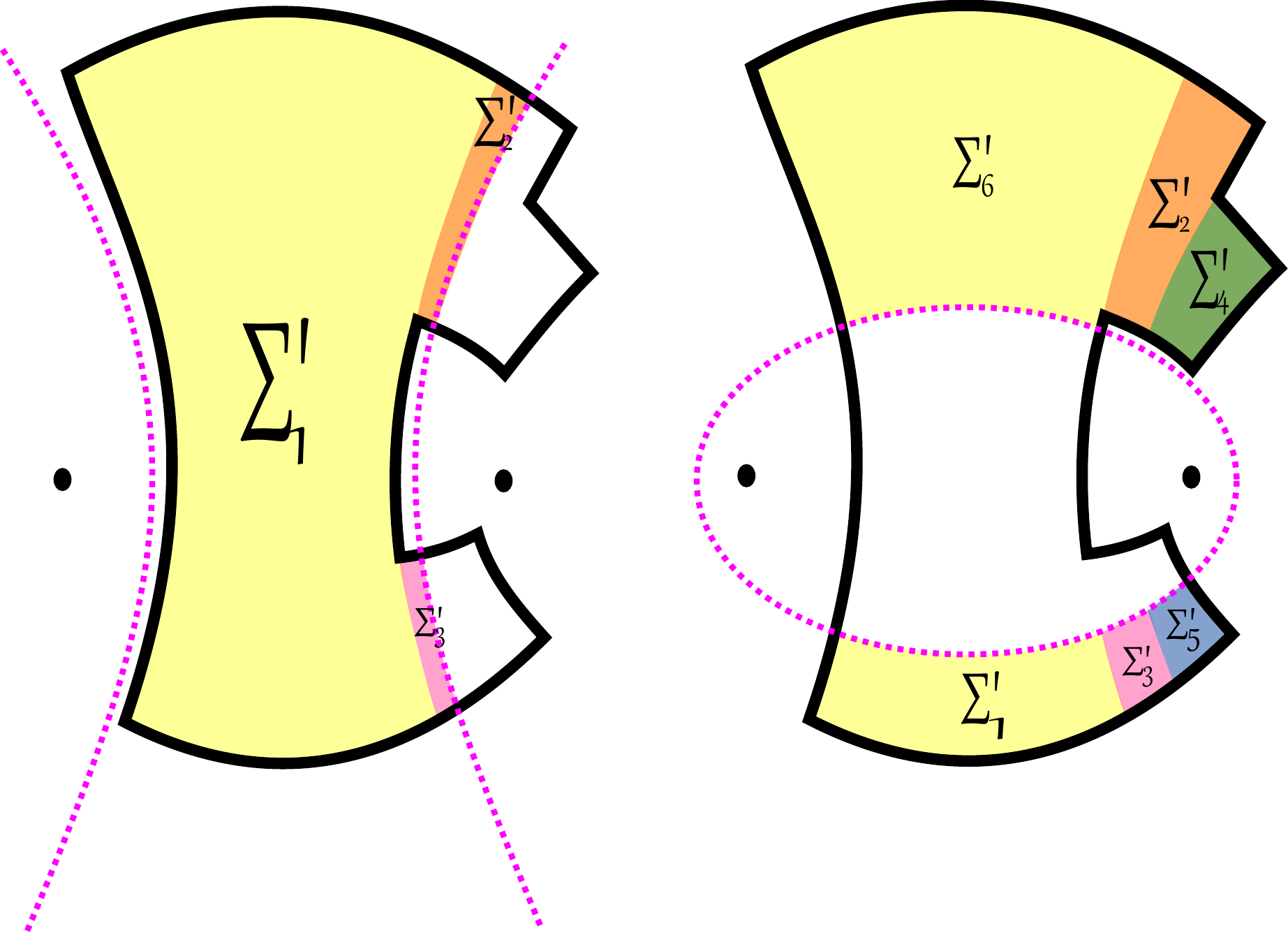}
	\caption{Фиолетовым изображена каустика, белые участки биллиарда не входят в область возможного движения. Изображен пример разбиения области возможного движения при эллиптических значениях интеграла (справа) и при гиперболических значениях интеграла (слева).} 
	\label{PosMotion}
\end{figure}
Теперь уточним определение граничной дуги гиперболы $\lambda_i$. Сначала дадим определение для однородно-гиперболического биллиарда, потом --- для однородно-эллиптического. 
\begin{definition}
	Рассмотрим однородно-гиперболический биллиард $\varOmega$ с выбранным на нем разбиением $\varSigma_1, \ldots, \varSigma_{N}$. Назовем граничной дугой гиперболы $\lambda_i$ объединение всех сегментов границ элементарных биллиардов $\varSigma_j$ лежащих на гиперболе с параметром $\lambda_i$ (см. рис. \ref{QuadBound})  
\end{definition}
\begin{definition}
	Рассмотрим однородно-эллиптический биллиард $\varOmega$ с выбранным на нем разбиением $\varSigma_1, \ldots, \varSigma_{N}$. Назовем граничной дугой эллипса $\lambda_i$ объединение всех сегментов границ элементарных биллиардов $\varSigma_j$ лежащих на эллипсе с параметром $\lambda_i$. 
\end{definition}
\begin{figure}[!ht]
	\centering
	\includegraphics[width=0.5\textwidth]{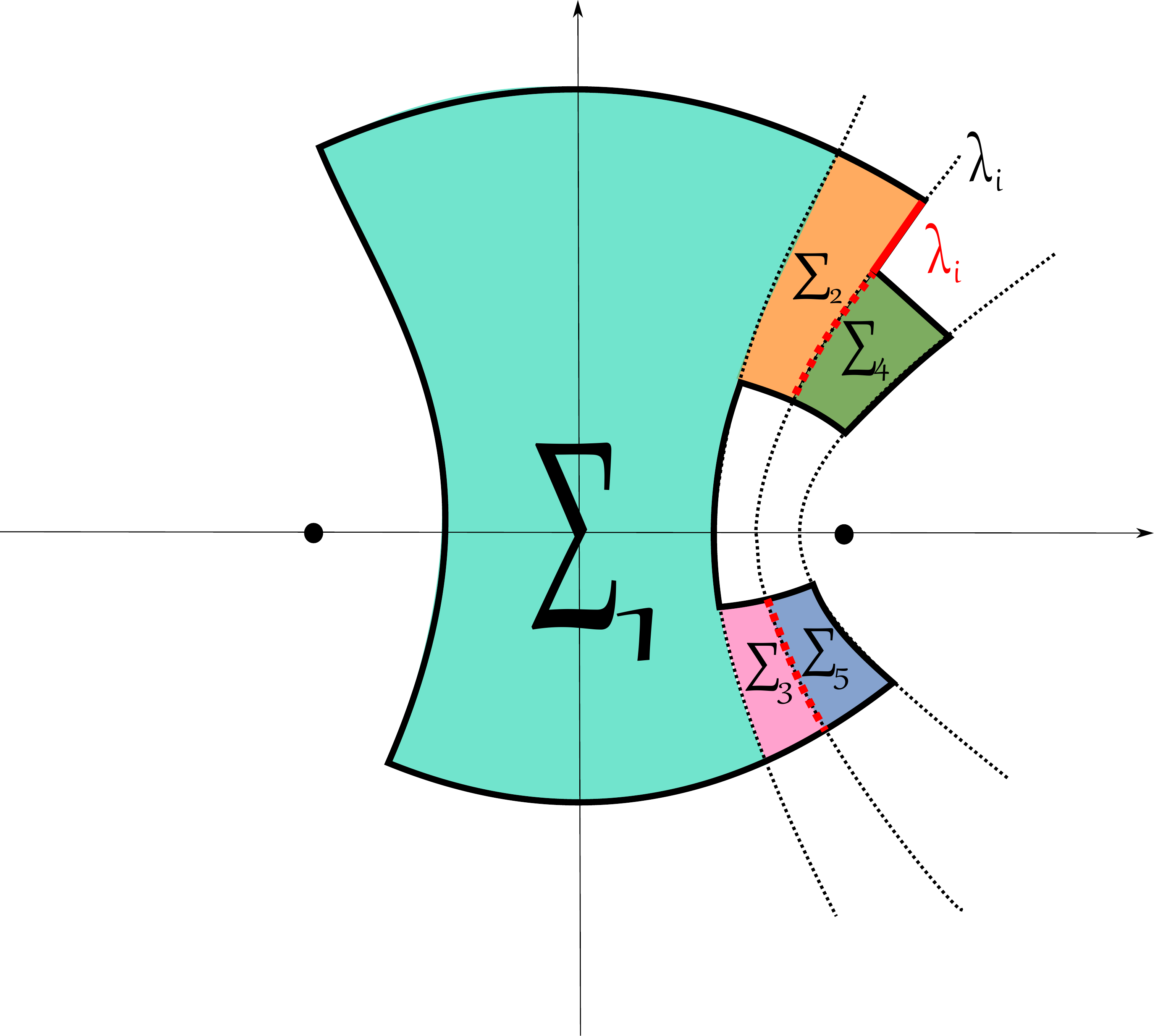}
	\caption{Пример однородно-гиперболического биллиарда $\varOmega$ c выбранным на нем разбиением. Черным пунктиром обозначена гипербола с параметром $\lambda_i$, а красным --- дуга граничной гиперболы $\lambda_i$.} 
	\label{QuadBound}
\end{figure}
Отметим, что дуги граничной квадрики определённые подобным образом действительно могут быть несвязными, так как некоторые границы между элементарными биллиардами разбиения --- внутренние. Однако, для будущего анализа нам проще рассматривать границы между элементарными биллиардами разбиения как настоящие границы, на некоторых сегментах которых отменен биллиардный закон. В дальнейшем под $\lambda_i$ будут подразумеваться граничные дуги гиперболы с параметром, также обозначаемым $\lambda_i$.
\subsection{Отношение эквивалентности на невыпуклых биллиардах.}
Зададим отношение эквивалентности на множестве однородных биллиардов. 
Теперь рассмотрим два однородных биллиардов $\varOmega$ и $\varOmega'$ c выбранными на них разбиениями на элементарные биллиарды $\varSigma_1, \ldots, \varSigma_N$ и $\varSigma'_1, \ldots, \varSigma'_N$ соответственно. 
\begin{definition}{\label{Eq}}
	Однородный биллиард $\varOmega$, ограниченный дугами квадрик из софокус\-ного семейства, называется эквивалентным другому однородному биллиарду $\varOmega'$, огра\-ничен\-ному дугами квадрик из того же семейства, если:
	\begin{enumerate}
		\item между множествами элементарных биллиардов $\varSigma_1, \ldots, \varSigma_N$ и $\varSigma'_1, \ldots, \varSigma'_N$ существует биекция $E$, причем $E(\varSigma_j) \sim \varSigma_j$ в смысле предыдущего определения;
		\item если элементарные биллиарды $\varSigma_j$ и $\varSigma_i$ пересекались по сегменту граничной дуги $\lambda_{i,t}$ и на дуге $\lambda_{i,t}$ лежало $t$ особых точек, то элементарные биллиарды $E(\varSigma_j)$ и $E(\varSigma_i)$ вновь пересекутся по дуге граничной гиперболы с таким же количеством особых точек $t$ на ней (см. рис. \ref{Eqnoteq}).  
	\end{enumerate}
	
\end{definition}
	\begin{figure}[!ht]
	\centering
	\includegraphics[width=0.5\textwidth]{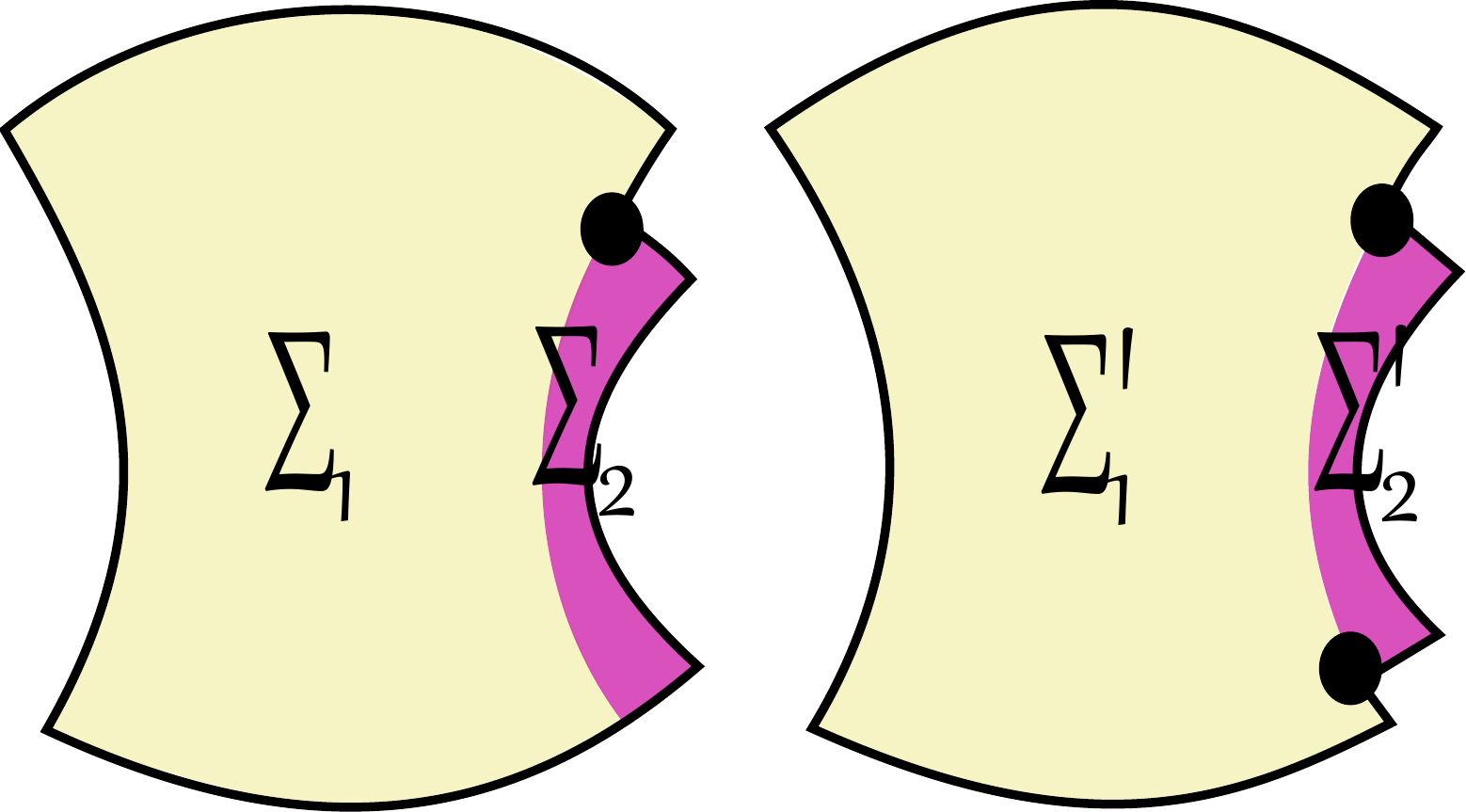}
	\caption{Пример двух неэквивалентных в смысле определения \ref{Eq} биллиардов. Нарушается условие 2: $\varSigma_1 \sim \varSigma'_1$ и $\varSigma_2 \sim \varSigma'_2$ как элементарные биллиарды, однако, условие два, очевидно, не выполняется. Отметим, что в силу Теоремы \ref{thDR} слоение Лиувилля у правого и левого биллиарда будут отличаться.}
	\label{Eqnoteq}
\end{figure}
В дальнейшем все биллиарды предполагаются однородными, рассматриваемыми с точностью до эквивалентности в смысле определения \ref{DefHom}. 
\section{Определение 3-атома для однородных невыпуклых биллиардов.}
Для исследования биллиардов с невыпуклыми углами на границе области следует расширить определение атома, так как на совместных поверхностях уровня интеграла могут лежать выколотые точки, а, как следствие, окрестности поверхностей уровня интегралов удобнее рассматривать как клеточные комплексы. Дадим определение для атома на критическом, не минимаксном, уровне $c$ ($c = b$ или $c = \lambda_i$ для некоторого $i$). 
\begin{definition}
	Трехмерным атомом (3-атомом) назовем трехмерную окрестность $U \subset Q^3$ двумерного особого слоя $G$, задаваемую неравенством $$c - \varepsilon \leq \Lambda \leq c + \varepsilon$$ для достаточно малого $\varepsilon$, расслоённую на двумерные поверхности уровни функции $\varLambda$ и рассматриваемую с точностью до послойной эквивалентности. Трехмерный атом для окрестности особого уровня $\varLambda = b$ обозначим через $U$, для $\lambda = \lambda_i$ --- через $U_{\lambda_i}$.  
\end{definition}
Также обозначим через $U_{(\lambda_i)}$ окрестность произвольного особого слоя. Рассмотрим однородный биллиард $\varOmega$ с выбранным на нем разбиением $\varSigma_1, \ldots, \varSigma_{N}$ c граничными гиперболами (эллипсами) $\lambda_1, \ldots, \lambda_n$. Также обозначим через $\varOmega^{+}_{c}$ --- область возможного движения при фиксированном дополнительном интеграле $\varLambda = c + \varepsilon$, через $\varOmega^{-}_{c}$ ---  при фиксированном дополнительном интеграле $\varLambda = c - \varepsilon$, а через $\varOmega_{c}$ --- область возможного движения при фиксированном дополнительном интеграле $\varLambda = c$. Заметим, что в силу определения биллиардной каустики (см. 2.2), граница областей возможного движения на всех уровнях интеграла, кроме уровня $\varLambda = b$, содержит сегменты каустики.
\begin{enumerate}
	\item Выберем 0-клетки. Рассмотрим биллиард $\varOmega$ и области возможного движения $\varOmega^{-}_c$, $\varOmega_c$, $\varOmega^{+}_c$, зафиксировав дополнительный интеграл $\varLambda$ равным $c-\varepsilon$, $c$ или $c + \varepsilon$ соответственно. На всех трех областях возможного движения проведем: все граничные квадрики $\lambda_1, \ldots, \lambda_n$ из разбиения $\varSigma_1, \ldots, \varSigma_{N}$, а в случае $c = b$ дополнительно проведем фокальную прямую. Рассмотрим пересечения проведенных квадрик с границами областей возможного движения  $\varOmega^{-}_c$, $\varOmega_c$, $\varOmega^{+}_c$. Обозначим данное множество точек через $P$. 0-клетками трехмерных комплексов $U_{(\lambda_i)}$ будут прообразы при естественной проекции $\pi$ выбранных вершин пересечений при заданных значениях дополнительного интеграла $\varLambda$ (см. пример на рис. \ref{0cell}). 
	
	\begin{figure}[h!]
		\begin{center}
			\begin{minipage}[h]{1.0\linewidth}
				\includegraphics[width=1\linewidth]{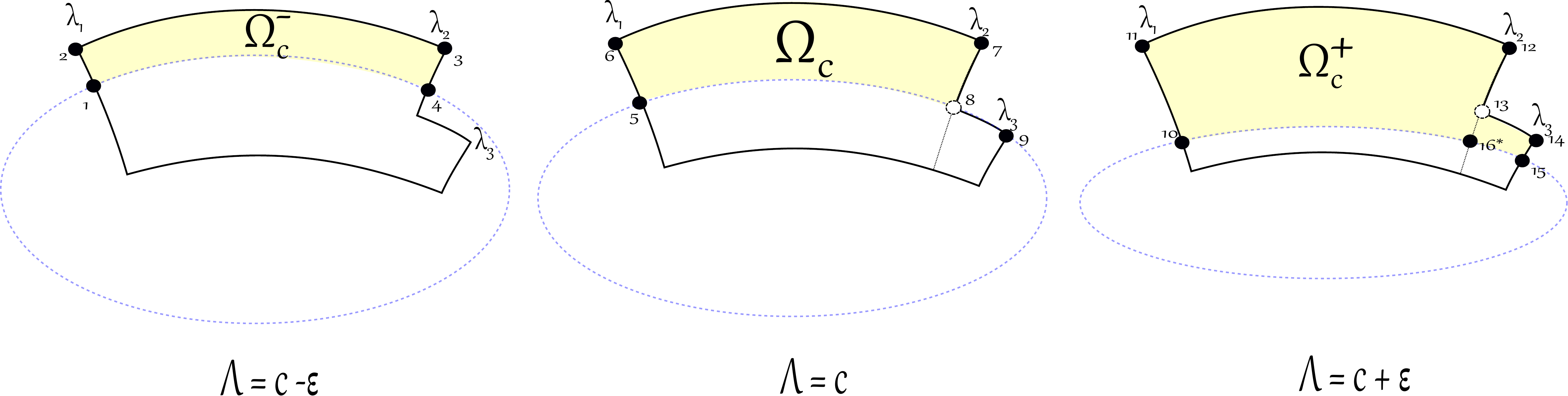}
				\caption{Пример 0-клеток комплекса $U_{\lambda_2}$ для биллиарда, изображённого на рисунке с зафиксированными значениями дополнительного интеграла $\varLambda = \lambda_2 - \varepsilon$ (слева), $\varLambda = \lambda_2$ (в центре) и $\varLambda = \lambda_2 + \varepsilon$ (справа). Области возможного движения обозначены желтым. Всего в комплексе $U_{\lambda_2}$ $17$ 0-клеток, а множество $P$ состоит $16$ элементов. Если номер точки из $P$ помечен звездочкой, то в данную точку проецируются естественной проекцией $\pi$ две 0-клетки из $U_{\lambda_2}$, иначе --- одна. Белым отмечены особые точки биллиарда $\varOmega$.} 
				\label{0cell} 
			\end{minipage}
		\end{center}
	\end{figure}
	\item 1-клетки комплекса $U_{(\lambda_i)}$ будут двух разных типов, то есть $U^{1}_{(\lambda_i)} = U^{1,1}_{(\lambda_i)} \cup U^{1,2}_{(\lambda_i)}$. Первый тип 1-клеток комплекса $U_{(\lambda_i)}$ --- это связные в $U_{(\lambda_i)}$ прообразы при естественной проекции $\pi$ точек из множества $P$ при всех значениях дополнительного интеграла $\varLambda$ из множества $(c-\varepsilon, c) \cap (c, c + \varepsilon)$. Связные прообразы при естественной проекции $\pi$ --- это в точности такие пары $(x,v) \in U_{(\lambda_i)}$, где векторы скорости $v$ направлены в одном направлении (от правого фокуса, от левого фокуса, к правому фокусу или к левому фокусу) на каждом уровне интеграла $\varLambda$. 
	Построим теперь второй тип 1-клеток, рассмотрим все ограниченные точками из $P$ связные сегменты границ областей возможного движения $\varOmega^{+}_c$, $\varOmega_c$ и $\varOmega^{-}_c$ (эллиптические \textbf{и} гиперболические) и сегменты квадрик $\lambda_1, \ldots, \lambda_n$, обозначим множество таких сегментов через $Q$. Также, в случае седлового атома $\varLambda = b$, добавим в $Q$, если таковой имеется, сегмент фокальной прямой между точками из $P$.  Второй тип 1-клеток --- это все связные прообразы сегментов квадрик из $Q$ при естественной проекции $\pi$, при фиксированных значениях интеграла $\varLambda = c - \varepsilon$, $\varLambda = c$ и $\varLambda = c + \varepsilon$ (см. пример на рис. \ref{1cell}).
		\begin{figure}[h!]
		\begin{center}
			\begin{minipage}[h]{1.0\linewidth}
				\includegraphics[width=1\linewidth]{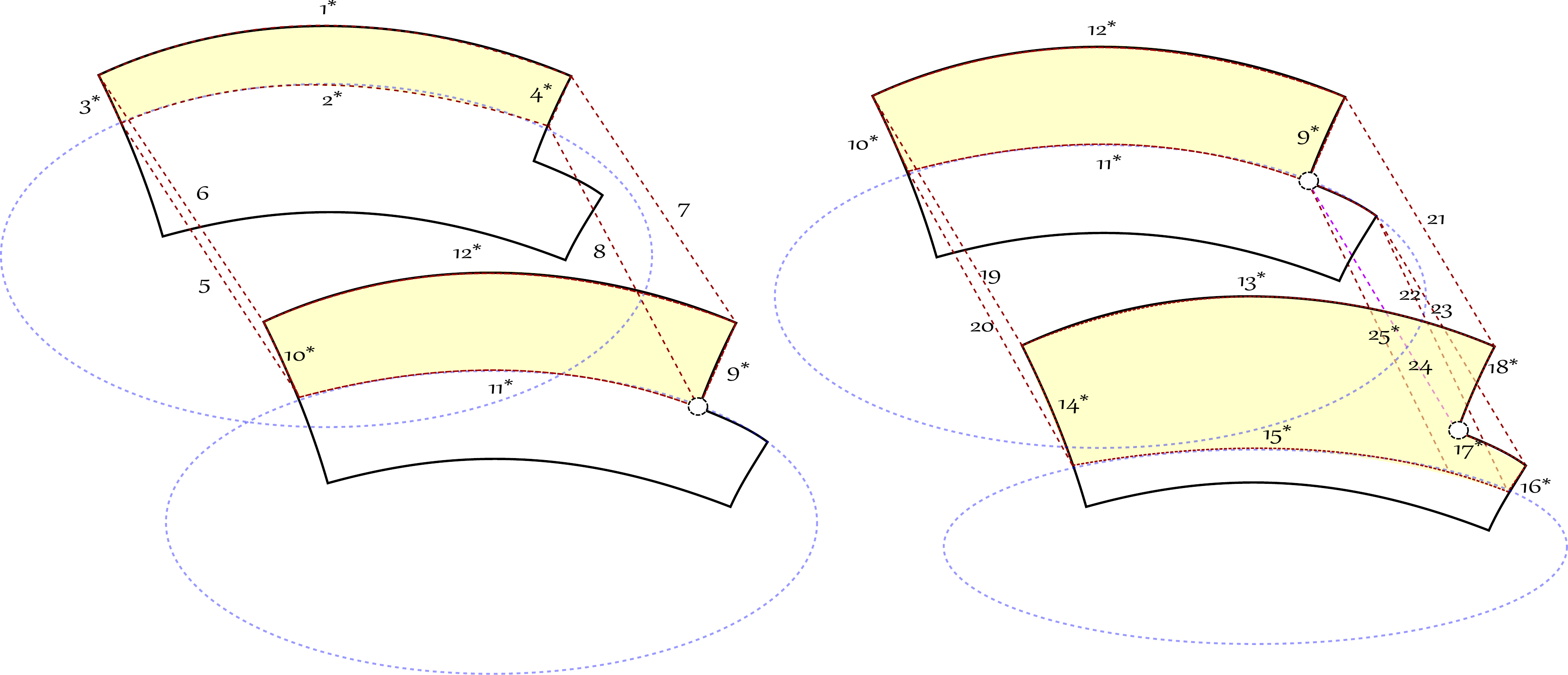}
				\caption{Пример 1-клеток комплекса $U_{\lambda_2}$ для биллиарда, изображённого на рисунке. Желтым обозначены области возможного движения. Слева изображены 1-клетки, соответствующие значениям дополнительного интеграла $\varLambda \leq c$, справа --- $\varLambda \geq c$. Всего в комплексе $U_{\lambda_2}$ будет 46 одномерных клетки (они отмечены красным цветом), из них 14 клеток первого типа и 32 второго (множество $Q$ состоит из $15$ элементов). Если номер отрезка помечен звездочкой, то в данный отрезок проектируется две клетки из $U_{\lambda_2}$ при естественной проекции $\pi$, если двумя звездочками --- четыре, иначе --- одна. Белым отмечены особые точки.} 
				\label{1cell} 
			\end{minipage}
		\end{center}
	\end{figure}
	\item Построим 2-клетки, для этого вновь рассмотрим множество граничных квадрик $Q$, построенных в предыдущем пункте. 2-клеток вновь будет два типа, то есть $U^{2}_{(\lambda_i)} = U^{2,1}_{(\lambda_i)} \cup U^{2,2}_{(\lambda_i)}$. Первый тип 2-клеток --- это связные компоненты при естественной проекции $\pi$ квадрик из $Q$ при всех значениях дополнительного интеграла $\varLambda$ из множества $\varLambda \in (c-\varepsilon, c) \cap (c, c + \varepsilon)$. 
	Второй тип 2-клеток --- это связные прообразы пересечений областей возможного движения $\varOmega^{-}_{c}$, $\varOmega_{c}$ и $\varOmega^{+}_{c}$ с элементарными биллиардами разбиения $\varSigma_1, \ldots, \varSigma_N$ при фиксированных значениях дополнительного интеграла $\varLambda = c - \varepsilon$, $\varLambda = c$ и $\varLambda = c + \varepsilon$ соответственно (см. пример на рис. \ref{2cell}). 
	
		\begin{figure}[h!]
		\begin{center}
			\begin{minipage}[h]{1.0\linewidth}
				\includegraphics[width=1\linewidth]{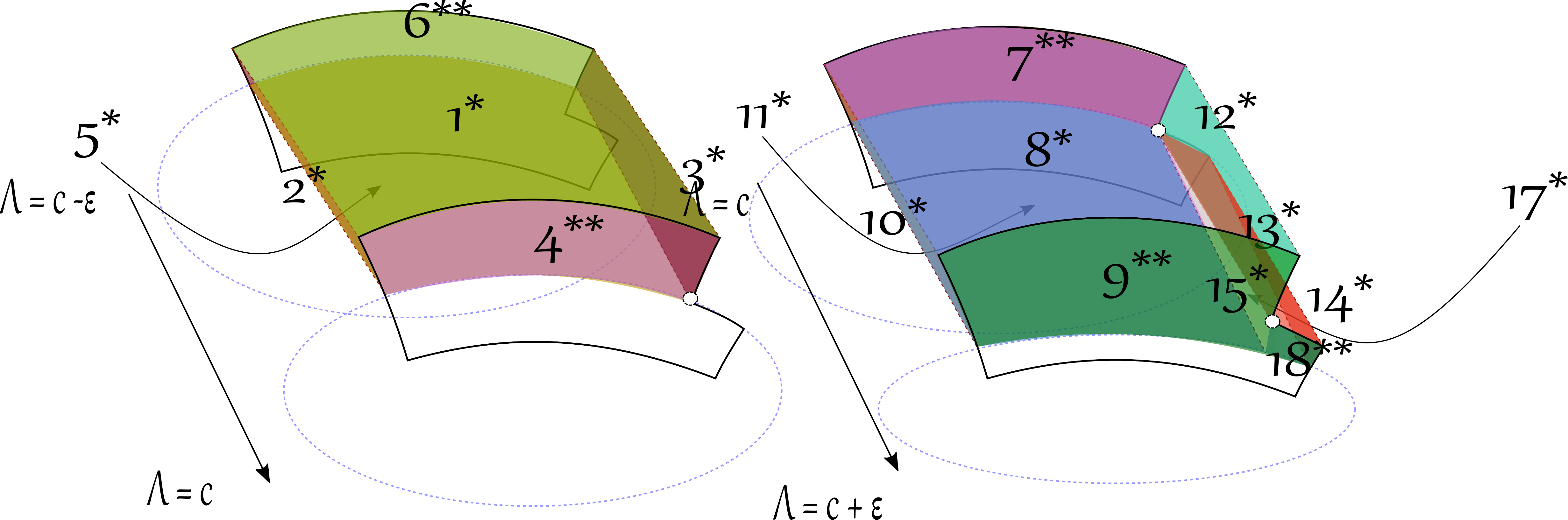}
				\caption{Пример 2-клеток комплекса $U_{\lambda_2}$ для биллиарда, изображённого на рисунке. Слева изображены 2-клетки, соответствующие значениям дополнительного интеграла $\varLambda \leq c$, справа --- $\varLambda \geq c$. Всего в комплексе $U_{\lambda_2}$ будет 46 двумерные клетки. Из них 26 клетки первого типа и 20 клеток второго типа. Если номер поверхности помечен звездочкой, то в данную поверхность проектируется две клетки из $U_{\lambda_2}$ при естественной проекции $\pi$, если двумя звездочками --- то четыре, иначе --- одна. Белым отмечены особые точки.} 
				\label{2cell} 
			\end{minipage}
		\end{center}
	\end{figure}
	\item Завершим построение комплексов $U_{(\lambda_i)}$, построив трехмерные клетки. Рассмотрим произвольное значение интеграла $\varLambda = \alpha \in [c-\varepsilon, c+ \varepsilon]$ и область возможного движения $\varOmega_{\alpha}$ для такого значения интеграла. Рассмотрим связные компоненты прообразов при естественной проекции $\pi$ пересечений внутренностей областей возможного движения $\varOmega_{\alpha}$ с элементарными биллиардами разбиения $\varSigma_1, \ldots, \varSigma_{N}$. Эти связные прообразы еще раз разобьем на две части по значениям дополнительного интеграла $\varLambda$, разрезав клетки по линии уровня функции $\varLambda = c$ (см. пример на рис. \ref{3cell}). 
		\begin{figure}[h!]
		\begin{center}
			\begin{minipage}[h]{1.0\linewidth}
				\includegraphics[width=1\linewidth]{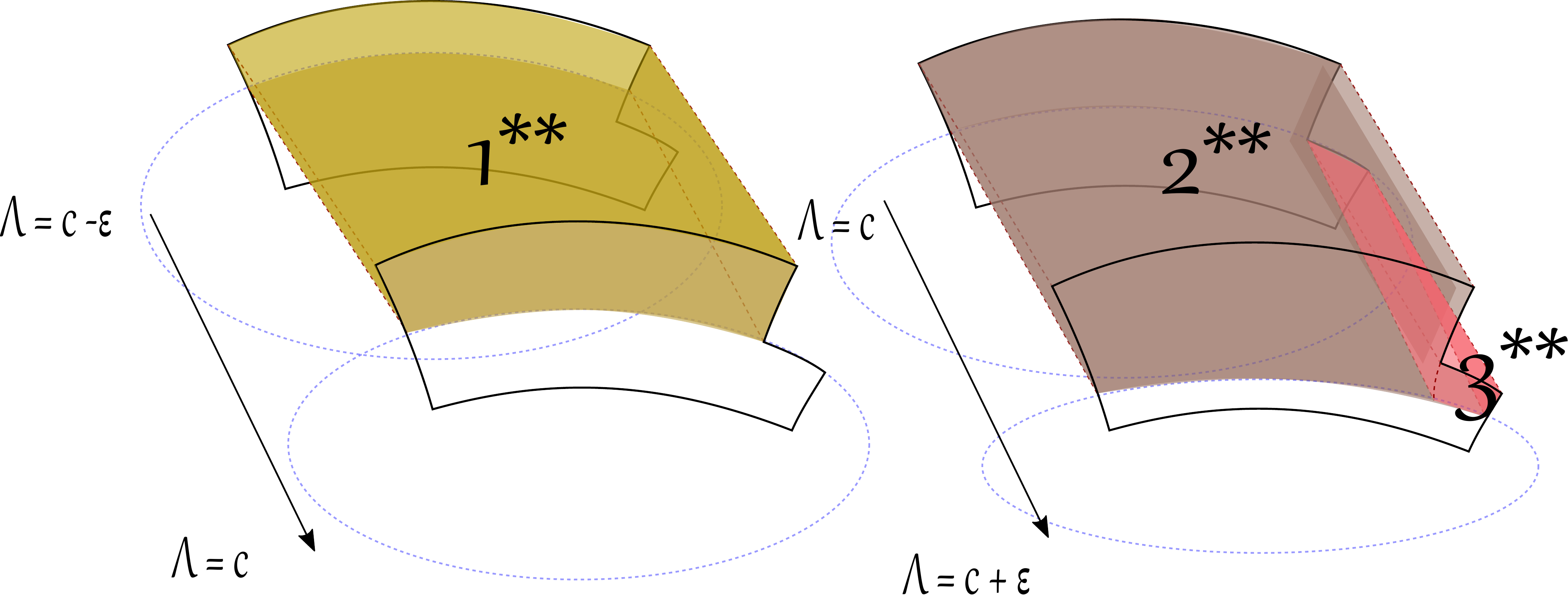}
				\caption{Пример 3-клеток комплекса $U_{\lambda_2}$ для биллиарда, изображённого на рисунке. Слева изображены 3-клетки, соответствующие значениям дополнительного интеграла $\varLambda \leq c$, справа --- $\varLambda \geq c$. Всего в комплексе $U_{\lambda_2}$ будет 12 трехмерных клеток. В каждую трехмерную поверхность проектируется четыре клетки из  $U_{\lambda_2}$.} 
				\label{3cell} 
			\end{minipage}
		\end{center}
	\end{figure}
\end{enumerate}
\begin{statement}
	Рассмотрим однородный биллиард $\varOmega$ с выбранным на нем разбиением $\varSigma_1, \ldots, \varSigma_{N}$ c граничными гиперболами (эллипсами) $\lambda_1, \ldots, \lambda_n$. Данное выше определение ком\-плек\-сов $U$ и $U_{\lambda_i}$ корректно, то есть, все указанные клетки действительно гомео\-морфны дискам соответствующей размерности и все свойства $CW$-комплексов выпол\-нены.
\end{statement}
\begin{proof}
	\begin{enumerate}
		\item Проверим, что $n$-мерные клетки действительно гоме\-омор\-фны $n$-мерным дискам и что граница $i$-мерной клетки содержится в объединении клеток меньшей размерности. Без ограничения общности, рассмотрим однородно-гиперболический биллиард $\varOmega$ с выбранным на нем разбиением на элементарные биллиарды $\varSigma_1, \ldots, \varSigma_N$. Рассмотрим сначала не седловой особый уровень дополнительного интеграла $\varLambda = \lambda_i$. В следствие теоремы Якоби-Шаля [1], в любой внутренней точке $x$ биллиарда $\varOmega$ можно выбрать четыре различных вектора скорости, направленных по касательной к каустике. В вершинах углов $\pi/2$ все четыре вектора скорости склеиваются в один в силу определения отражения в углах (см. опр. 1.1), а в вершинах углов $3\pi/2$ векторы скорости не определены. Вне вершин углов, на тех сегментах границы биллиарда $\varOmega$, которые попадают в область возможного движения, векторы скорости попарно склеиваются в силу биллиардного закона отражения, т.е. в таких точках биллиарда определены только два несовпадающих вектора скорости. Также два вектора скорости определены в точках, лежащих на каустике, так как в таких точках можно задать только два лежащих касательных вектора, которые лежат на одной прямой.
		
		В случае $\varLambda = b$ в силу оптического свойства квадрик, на таком уровне вектора скорости всех точек обязаны лежать на прямых, проходящих через фокусы (каустика на данном уровне не определена). В таком случае, два вектора скорости будут определены также в точках фокальной прямой и только один вектор скорости будет определён в точках пересечения фокальной прямой и границы биллиарда. 
		
		Очевидно, что 0-клетки комплекса $U_{(\lambda_i)}$ гомеоморфны 0-мерным дискам. Далее, рассмотрим одномерные клетки комплексов $U_{(\lambda_i)}$ первого типа.
		В точке $x$ из $P$ может быть определен либо один единственный вектор скорости, либо два. Тогда при изменении дополнительного интеграла $\varLambda$ вектор скорости в точке $x$ будет меняться непрерывно, а потому множества $(x,v)$, где $ x \in P$ будут гомеоморфны либо одному отрезку (если в точке $x$ был определен $1$ вектор), либо двум (если в точке $x$ было определено $2$ вектора). Также заметим, что границами 1-клеток первого будут 0-клетки, так как это будут точки вида $(x,v), x \in P, \varLambda = c \pm \varepsilon, \varLambda = c$, что в точности совпадает с 0-клетками.
		Второй тип 1-клеток гомеоморфен отрезкам и его границами являются 0-клетки по определению. 
		
		Рассмотрим 2-клетки. Начнем с 2-клеток первого типа и рассмотрим сегмент границы биллиарда $\varOmega$ из $Q$ (ограниченный точками из $P$). В каждой точке $x$ такого сегмента $q$, на каждом уровне интеграла $\varLambda = \alpha$, можно определить два вектора скорости, следовательно, каждая 2-клетка гомеоморфна произведению квадрики $q$ на отрезок (так как мы выбираем связную компоненту). В итоге, каждая 2-клетка первого типа гомеоморфна 2-диску и ее границей будут 1-клетки первого типа (пары $(x,v)$, c фиксированной точкой $x$). 
		Легко заметить, что границами таких клеток будут 1-клетки. 2-клетки второго типа, очевидно, гомеоморфны 2-дискам и их границами являются 1-клетки второго типа.
		
		Теперь рассмотрим 3-клетки. В каждой внутренней точке $x$ области возможного движения $\varOmega_{\alpha}$ определены четыре вектора скорости и $\varOmega_{\alpha} \cong \varOmega_{\beta}$, если и только если $\alpha$ и $\beta$ одновременно больше $c$, одновременно равны $c$, либо одновременно меньше $c$. Следовательно каждая двумерная клетка гомеоморфна $A \times I$, где $A$ --- область биллиарда, гомеоморфная пересечению $\varOmega_{\alpha}$ с некоторым элементарным биллиардом разбиения $\varSigma_j$.
		Также легко заметить, что их границами являются 2-клетки первого и второго типа.
		
		Аксиомы $C$ и $W$ выполнены в силу конечности размерности и количества клеток каждой размерности.  
	\end{enumerate}
\end{proof}
\section{Окрестности особых слоев $\Lambda = \lambda_i$. Описание топологии атомов $U_{\lambda_i}$.}
Особый интерес при исследовании биллиардов с невы\-пук\-лыми углами на границе бил\-лиардной области пред\-ставляют атомы на уровне $\varLambda = \lambda_i$, где $\lambda_i$ --- параметр квадрики на которой лежат вершины углов $3\pi/2$. При переходе через эти критические уровни меняется род регулярной поверхности, так как в области возможного движения меняется количество особых точек. В выпуклых биллиардах не существует аналогов таких слоев. 

Перейдем к рассмотрению случаю $\varLambda = \lambda_i$ и, соответственно, к изучению топологии трехмерного комплекса $U_{\lambda_i}$. Зафиксируем плоский, односвязный и однородный биллиард на плоскости $\varOmega$. Также без ограничения общности будем предполагать, что биллиард однородно-гиперболический, что особые точки лежат только на правой дуге гиперболы и что вблизи гиперболы $\lambda_i$ нет других дуг граничных гипербол, содержащих особые точки (см. замечания после Теоремы 3). Начнем изучение топологии комплекса $U_{\lambda_i}$ с определения комплекса $\tilde U_{\lambda_i} \subset U_{\lambda_i}$, проектирующегося при естественной проекции $\pi: Q^3 \rightarrow \varOmega$ на область биллиарда $\varOmega_{\lambda_i} = \{x \in \varOmega~|~ d(x, \lambda_i) \leq \varepsilon, \exists (x,v) \in U_{\lambda_i} : \pi(x,v) = x \}$, где $d(x,y)$ --- стандартная метрика на плоскости, то есть в участок области возможного движения при фиксированном интеграле $\varLambda = \lambda_i$ вблизи гиперболы с параметром $\lambda_i$.  Введем некоторые обозначения. Рассмотрим область биллиарда $\varOmega_{\lambda_i}$, разрежем ее по гиперболе $\lambda_i$. Обозначим через $\nu$ число компонент связности внутри гиперболы $\lambda_i$, а через $\xi$ --- число компонент связности вне гиперболы $\lambda_i$ (см. рис. \ref{GammaObl}).
	\begin{figure}[!ht]
	\centering
	\includegraphics[width=0.9\textwidth]{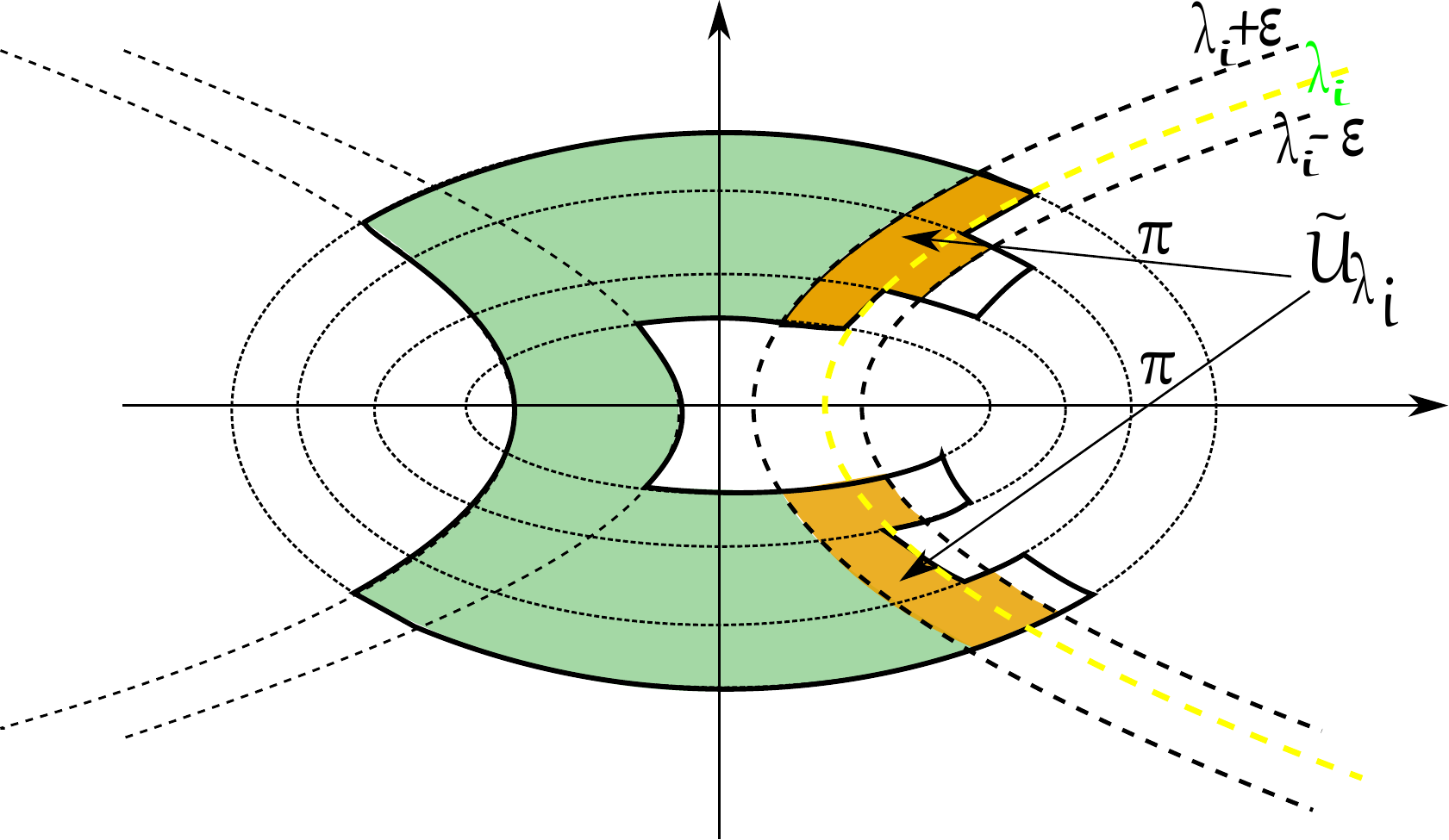}
	\caption{Пример. Область $\varOmega_{\lambda_i}$ --- это область между двумя гиперболами с параметрами $\lambda_i+\varepsilon$ и $\lambda_i - \varepsilon$. Здесь $\nu =2, \xi =3$.}
	\label{GammaObl}
\end{figure}

Перейдем от разбиения $\varSigma_1, \ldots, \varSigma_N$ биллиарда $\varOmega$ к разбиению <<полосы>> $\varOmega_{\lambda_i}$. Рассмотрим элементарные биллиарды $\varSigma'_j = \varSigma_j \cap \varOmega_{\lambda_i}$. Получившиеся в итоге разбиение участка биллиарда $\varOmega_{\lambda_i}$ обозначим через $\varSigma'_1, \ldots, \varSigma'_t, \ldots, \varSigma'_{\xi+\nu}$, где $\xi + \nu \leq N$.  
\subsection{Определение разрезающих комплексов $T_i$.}
Рассмотрим трехмерный комплекс $\tilde U_{\lambda_i}$, определенный выше. Определим двумерный подкомплекс $T_i$ в трехмерном комплексе $\tilde U_{\lambda_i}$, который будет прообразом граничной дуги при естественной проекции $\pi$ при фиксированном дополнительном интеграле $\varLambda \in [\lambda_i - \varepsilon, \lambda_i + \varepsilon]$.
\begin{definition}
	Рассмотрим однородно-гиперболический (эллиптический) биллиард $\varOmega$ с выбранным на нем разбиением $\varSigma_1, \ldots, \varSigma_N$ и рассмотрим двумерный остов $T_i \subset \tilde U_{\lambda_i}$ трехмерного подкомплекса $\tilde U_{\lambda_i} \in U_{\lambda_i}$, где клетки выбраны как показано в главе 5. 
\end{definition}
\subsection{Описание комплекса $U_{\lambda_i} \setminus \tilde U_{\lambda_i}$.}
Перед подробным анализом топологии трехмерного комплекса $\tilde U_{\lambda_i}$ сначала рассмотрим трехмерный комплекс $U_{\lambda_i} \setminus \tilde U_{\lambda_i}$, где пары $(x,v) \in U_{\lambda_i} \setminus \tilde U_{\lambda_i}$  проектируются естественной проекцией $\pi$ в биллиардную область <<вне дуги $\lambda_i$>>, то есть, в область биллиарда $\varOmega \setminus \varOmega_{\lambda_i}$. Заметим, что движение в области $\varOmega \setminus \varOmega_{\lambda_i}$ на уровнях дополнительного интеграла $\varLambda \in [\lambda_i-\varepsilon, \lambda_i+ \varepsilon]$ происходит аналогично движению на неособом уровне, так как область возможного движения меняется в окрестности гиперболы $\lambda_i$, а каждая точка внутренняя области $\varOmega \setminus \varOmega_{\lambda_i}$ по прежнему оснащается четырьмя векторами скорости. 

Для этого рассмотрим произвольный однородно-гиперболический биллиард $\varOmega$ и его произвольный неособый слой $\varLambda = \alpha$, описывающийся теоремой 2. Теперь обрежем этот биллиард по произвольной гиперболе, обозначим получившийся <<обрезанный>> биллиард через $\hat \varOmega$ и найдем чему теперь гомеоморфен слой $\varLambda = \alpha$.   
\begin{statement}\label{CutBilliard}
Рассмотрим однородно-гиперболический биллиард $\varOmega$ и неособый слой дополнительного интеграла $\varLambda  = \alpha$. Теперь разрежем биллиард $\varOmega$ по дуге гиперболы с параметром $\theta$, не содержащей вершин углов, обозначим получившийся биллиард (внутри дуг разрезанной гиперболы) через $\hat \varOmega$.  Тогда полный прообраз при проекции $\pi$ биллиарда $\hat \varOmega$ в $Q^3$ гомеоморфен поверхности рода $g+1$ с $g$ выколотыми и где $\nu$ ручек этой поверхности разрезаны. Здесь $g$ --- количество особых точек, лежащих внутри области возможного движения при дополнительном интеграле $\varLambda = \alpha$ и лежащих внутри биллиарда $\hat \varOmega$. А $\nu$ --- число связных компонент граничной дуги гиперболы с параметром $\theta$. 
\end{statement}
\begin{proof}
	В силу теоремы 2 [4] неособый слой дополнительного интеграла $\varLambda$ у биллиарда $\hat \varOmega$ был бы гомеоморфен поверхности рода $g+1$ с $g$ выколотыми точками, если бы на гиперболе разреза $\theta$ выполнялся бы биллиардный закон.   Обозначим этот двумерный (значение обоих интегралов фиксировано) комплекс через $G$. Выберем заполнение биллиарда $\varOmega$ софокусными гиперболами семейства, тогда мы также сможем расслоить $G$ на одномерные прообразы гипербол расслоения при естественной проекции $\pi$. Найдем прообраз произвольной гиперболы из расслоения, оснащенной векторами скорости, направленными вправо. Каждая внутренняя точка $x$ биллиарда $\varOmega$, лежащая в области возможного движения, может быть оснащена двумя векторами скорости, каждая точка $x$, лежащая на границе биллиарда ---  двумя, которые склеиваются по биллиардному закон. Тогда прообраз такой оснащенной гиперболы --- $\nu$ окружностей, где $\nu$ ее число компонент связности. Тогда отмена биллиардного закона на гиперболе $\theta$ --- это разрез $\nu$ ручек.  
\end{proof}
\subsection{Описание не седловых бифуркационных слоев.}
Следующая теорема описывает топологию трёхмерного комплекса $\tilde U_{\lambda_i}$ (см. рис. \ref{Table1}).
\begin{theorem}\label{ThSing1}
	Рассмотрим однородно-гиперболический биллиард $\varOmega$ с выбранным на нем разбиением $\varSigma_1, \ldots, \varSigma_N$. Рассмотрим окрестность $\varLambda \in [\lambda_i-\varepsilon, \lambda_i+ \varepsilon]$ особого слоя второго интеграла $\varLambda = \lambda_i$ и, соответственно, трехмерные комплексы $U_{\lambda_i}$ и $\tilde U_{\lambda_i}$. Также рассмотрим область $\varOmega_{\lambda_i} \subset \varOmega$ и перейдем к ее фокальному разбиению на элементарные биллиарды $\varSigma'_1, \ldots, \varSigma'_{\nu+\xi}$. Вырежем из комплекса $U_{\lambda_i}$ трехмерный комплекс $\tilde U_{\lambda_i}$, а из  $\tilde U_{\lambda_i}$ вырежем двумерный разрезающий комплекс $T_i$. Тогда:
	\begin{enumerate}
		\item Трехмерный комплекс $U_{\lambda_i} \setminus \tilde U_{\lambda_i} \cong G_g \times I$, где $G_g$ --- поверхность рода $g+1$ с $g$ выколотыми и где $\nu$ ручек этой поверхности разрезаны. Здесь $g$ --- количество особых точек, лежащих внутри области возможного движения при дополнительном интеграле $\varLambda = \alpha \in [\lambda_i-\varepsilon, \lambda_i+ \varepsilon]$ и лежащих внутри биллиарда $\varOmega \setminus \varOmega_{\lambda_i}$; 
		\item Двумерный комплекс $T_i \cong Gr_i \times I$, где $Gr_i$ --- граф, построенный по Алгоритму 1;
		\item Трехмерный комплекс $\tilde U_{\lambda_i} \setminus T_i |_{\varLambda = \alpha, \alpha \in [\lambda_i - \varepsilon, \lambda_i]} \cong (C_1 \cup \ldots \cup C_{2\nu+2\xi}) \times I$; \\
		Трехмерный комплекс $\tilde U_{\lambda_i} |_{\varLambda = \alpha, \alpha \in [\lambda_i, \lambda_i+\varepsilon]} \cong (C_1 \cup \ldots \cup C_{2\nu}) \times I$;
		\item Трехмерный комплекс $\tilde U_{\lambda_i} \setminus T_i |_{\varLambda = \alpha, \alpha \in [\lambda_i - \varepsilon, \lambda_i]}$ приклеивается к двумерному комплексу $T_i$ послойно: на каждом уровне $\varLambda = \alpha, \alpha < \lambda_i$ все цилиндры $(C_1 \cup \ldots \cup C_{2\nu+2\xi})$ приклеиваются к графу $Gr_i$ по Алгоритму 2, а при $\varLambda = \lambda_i$ к графу $Gr_i$ приклеиваются только цилиндры $C_1 \cup \ldots \cup C_{2\nu}$ (Алгоритм 2 применяется только для элементарных биллиардов $\varSigma'_1, \ldots, \varSigma'_N$).
	\end{enumerate}
\end{theorem}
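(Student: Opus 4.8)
The plan is to analyze $U_{\lambda_i}$ one level set at a time: fix $\varLambda = \alpha \in [\lambda_i - \varepsilon, \lambda_i + \varepsilon]$, describe the two–dimensional fibre piece over $\varOmega_{\lambda_i}$ and over $\varOmega \setminus \varOmega_{\lambda_i}$ by combining the CW–structure of Section 5 (legitimate by the preceding Statement) with Theorem \ref{thDR} and Statement \ref{CutBilliard}, and then reassemble the one–parameter family in $\alpha$, checking that it is a product over each of the half–intervals $[\lambda_i - \varepsilon, \lambda_i]$ and $[\lambda_i, \lambda_i + \varepsilon]$. The four assertions of the theorem then correspond, respectively, to the outside piece, to the cutting complex, to the inside piece, and to the way the latter glues to the former.

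Part 1 is essentially a restatement of Statement \ref{CutBilliard}. Over $\varOmega \setminus \varOmega_{\lambda_i}$ the hyperbola $\lambda_i$ is avoided, so for every $\alpha$ in the segment each interior point still carries four velocity vectors and the possible–motion region meets this part of $\varOmega$ in a way that does not depend on $\alpha$; hence the fibre keeps its homeomorphism type over the whole segment. Applying Statement \ref{CutBilliard} to $\lambda_i$, whose boundary arc has $\nu$ connected components, identifies that fibre with $G_g$ (genus $g+1$, $g$ punctures, $\nu$ handles cut), and triviality in $\alpha$ gives $U_{\lambda_i}\setminus\tilde U_{\lambda_i}\cong G_g\times I$.

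For Parts 2 and 3 I would fix $\alpha$ and use the optical property of confocal quadrics (the Jacobi–Chasles theorem, cf. §4.2): over an interior point of the boundary arc of $\lambda_i$ the admissible velocity directions form a fixed finite set, and the cells of Section 5 organize the $\pi$–preimage of the whole arc into a graph. Since the combinatorics of this graph is constant as $\alpha$ varies within $[\lambda_i-\varepsilon,\lambda_i]$ (resp. within $[\lambda_i,\lambda_i+\varepsilon]$), one gets $T_i\cong Gr_i\times I$ with $Gr_i$ produced by Algorithm 1. For the complement, cutting $\varOmega_{\lambda_i}$ along $\lambda_i$ splits its possible–motion part into the $\nu$ pieces inside and the $\xi$ pieces outside the hyperbola when $\alpha\le\lambda_i$, and into the $\nu$ inside pieces only when $\alpha\ge\lambda_i$, since by §4.2 the possible–motion region then lies inside the caustic hyperbola with parameter $\alpha>\lambda_i$. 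Each such planar piece coincides, up to the equivalence of homogeneous billiards introduced in §4, with a piece of one of the elementary billiards $\varSigma'_j$; by the elementary–billiard picture underlying Theorem \ref{thVed}, its preimage at a single level with the preimage of $\lambda_i$ deleted is an annulus. The two coherent velocity orientations over each piece then yield the $2\nu+2\xi$ (resp. $2\nu$) cylinders $C_m$, and the $\alpha$–family over each half–interval is again a product, which gives the stated homeomorphisms.

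Part 4 and the attaching maps follow by collating these layerwise descriptions. By the construction of Section 5, $\tilde U_{\lambda_i}$ is the union of $T_i$ with the cylinders $C_m$, glued along $1$– and $2$–cells; for fixed $\alpha<\lambda_i$ the boundary circles of $C_1,\dots,C_{2\nu+2\xi}$ land on $Gr_i$, and which edges and vertices they cover is dictated by the billiard reflection law on the arcs bounding the $\varSigma'_j$ and by the identifications of velocity directions there — exactly the data recorded by Algorithm 2 applied to each elementary billiard of the strip decomposition; at $\alpha=\lambda_i$ only the $2\nu$ inside cylinders survive and attach by the same rule. The main obstacle I expect is the transition across $\alpha=\lambda_i$: one must verify that the product structures of Parts 2–3 persist up to and including the singular level, and in particular track what happens to the velocity fibre over points of the arc $\lambda_i$ that are, or limit to, the $3\pi/2$ vertices, where no velocity vector is defined and where the non–convex phenomenon is concentrated; everything else reduces to a finite bookkeeping of cells resting on Ved's classification of elementary billiards and on Statement \ref{CutBilliard}.
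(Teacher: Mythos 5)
Ваше предложение по существу совпадает с доказательством в работе: тот же послойный анализ по уровням $\varLambda=\alpha$, часть 1 через Утверждение~\ref{CutBilliard} и постоянство числа особых точек (в работе это оформлено ссылкой на теорему~\ref{thDR}), части 2--3 через подсчёт допустимых векторов скорости по теореме Якоби--Шаля над точками каждого типа дуги $\lambda_i$ и над элементарными биллиардами полосы (по два цилиндра $C^R_j, C^L_j$ на каждый), часть 4 --- через поднятие биллиардного закона на дуге $\lambda_i$, что и кодирует Алгоритм~2. Отмеченное вами <<главное препятствие>> при переходе через $\alpha=\lambda_i$ в работе снимается ровно тем наблюдением, которое вы сами приводите: структура прообраза зависит лишь от типа точки (внутренняя, граничная, угловая, особая) и от положения области возможного движения относительно гиперболы $\lambda_i$, то есть лишь от того, выполняется ли $\alpha<\lambda_i$, $\alpha=\lambda_i$ или $\alpha>\lambda_i$.
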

\begin{figure}[h!]
	\begin{center}
		\begin{minipage}[h]{1.0\linewidth}
			\includegraphics[width=0.9\linewidth]{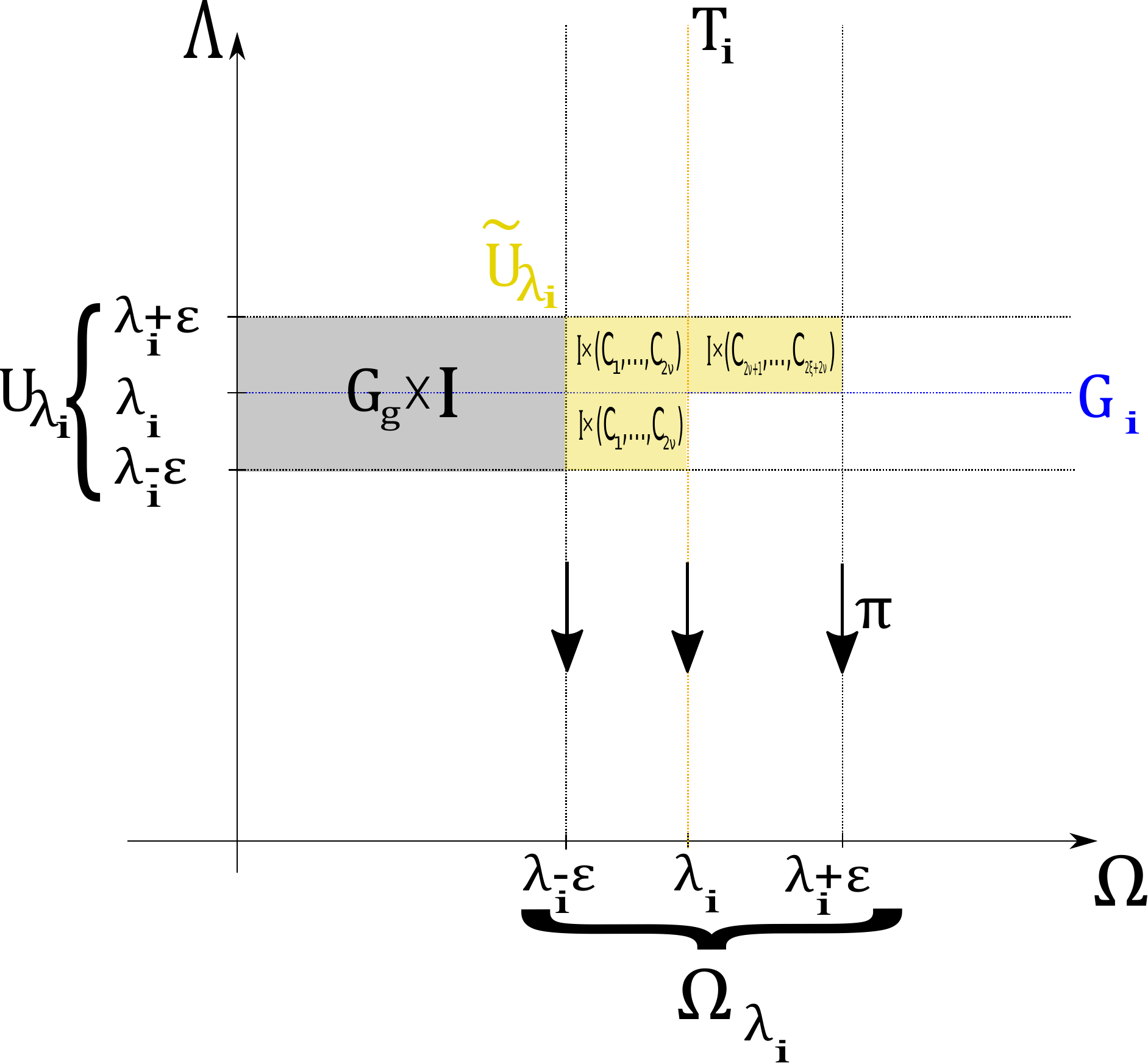}
			\caption{Иллюстрация теоремы 3. По оси абсцисс обозначены значения естественной проекции $\pi$, а по оси ординат --- значения дополнительного интеграла $\varLambda$.} 
			\label{Table1} 
		\end{minipage}
	\end{center}
\end{figure}
\textbf{Алгоритм 1. Алгоритм построения графа $Gr_i$ --- прообраза дуги $\lambda_i$ при естественной проекции $\pi$ при любом фиксированном значении дополнительного интег\-рала $\varLambda = \alpha \neq b$.}
\begin{figure}[h!]
	\begin{center}
		\begin{minipage}[h]{1.0\linewidth}
			\includegraphics[width=0.9\linewidth]{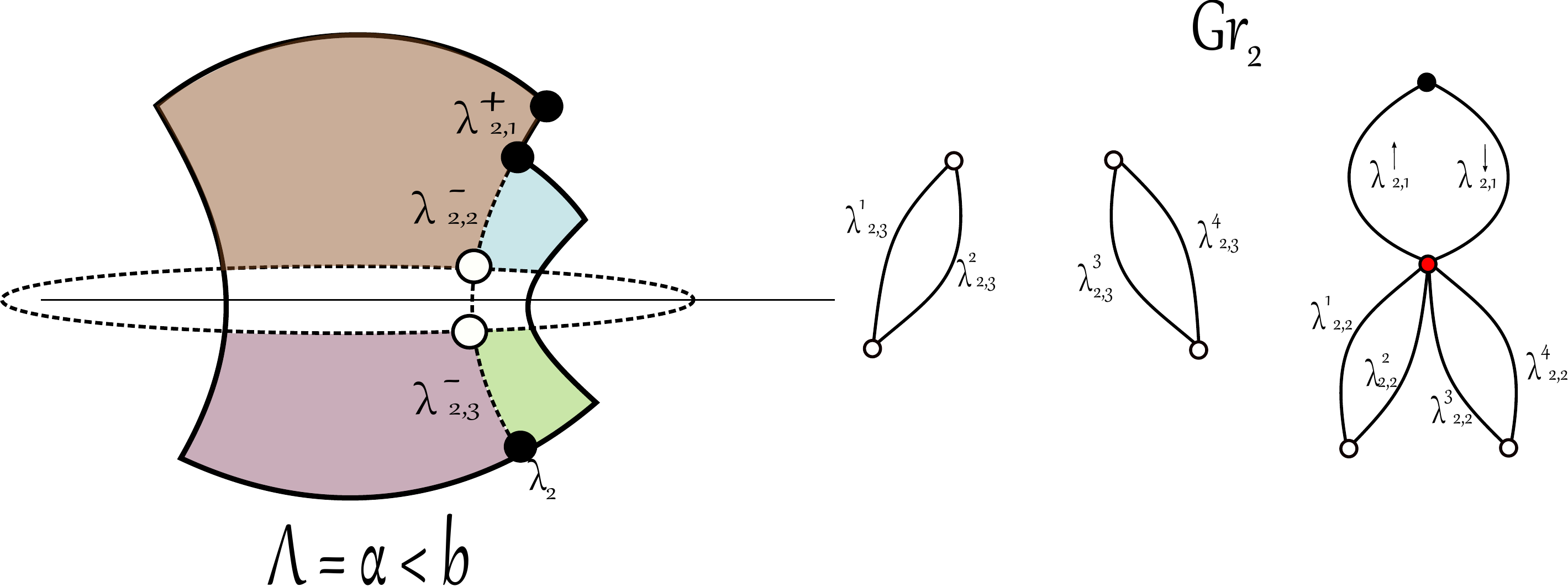}
			\caption{Пример построения графа $Gr_2$ по Алгоритму 1 для дуги граничной гиперболы $\lambda_2$ биллиарда, изображенного на рисунке.} 
			\label{Alg1} 
		\end{minipage}
	\end{center}
\end{figure}
\begin{enumerate}
	\item[Шаг 1.] Рассмотрим ту часть граничной дуги $\lambda_i$ биллиарда $\varOmega$, которая попадает в область возможного движения $\varOmega_{\alpha}$ при заданном значении интеграла $\varLambda = \alpha \neq b$. Рассмотрим пересечения этой дуги $\lambda_i$ с границами области возможного движения биллиарда $\varOmega_{\alpha}$. На этой части дуги $\lambda_i$ отметим черными точками $$ b^{i}_1, \ldots,  b^{i}_{n^b_i}$$ все вершины углов $\pi/2$ или $3\pi/2$. Белыми точками $$ w^i_1, \ldots,  w^i_{n^w_i}$$ отметим пересечения дуги $\lambda_i$ с границами области возможного движения $\varOmega_{\alpha}$ вне вершин углов. Обозначим множество черных точек на граничной дуге $\lambda_i$ через $B_i$, а белых --- $W_i$. Теперь рассмотрим сегменты граничной дуги $\lambda_i$, ограниченные отмеченными точками и попадающие в область возможного движения. Назовем их $\lambda_{i,t}$, введя на дуге ${\lambda_i}$ естественную нумерацию сегментов $t$, здесь $t \in 1, \ldots, n^w_i+n^b_i$. Дополнительно пометим сегменты граничных дуг $ \lambda_{i,t}$ символом <<$+$>>, если сегмент является частью границы области возможного движения $\varOmega_{\alpha}$, и символом <<$-$>>, если сегмент $ \lambda_{i,t}$ является границей между элементарными биллиардами разбиения, но не является сегментом границы объемлющей области возможного движения $\varOmega_{\alpha}$.

	\item[Шаг 2.] Рассмотрим граничную дугу $\lambda_i$ и завершим построение $Gr_i \cong \pi^{-1}(\lambda_i) |_{\varLambda = \alpha}$. Каждой черной точке из множества $B_i$ соответствует одна вершина графа $$b^{i}_1, \ldots, b^{i}_{n^b_i}.$$ Каждой белой точке из множества $W_i$ соответствуют две вершины (всего $2n^w_i$ вершин) графа $Gr_i$, обозначаемые $$w^{i,r}_1, w^{i,l}_1, \ldots, w^{i,r}_{n^w_i}, w^{i,l}_{n^w_i}.$$ Каждому граничному сегменту $\lambda^{+}_{i,t}$ будут соответствовать два ребра (векторы в $Q^3$ будут направлены вверх и вниз) $$\lambda^{\uparrow}_{i,t}, \lambda^{\downarrow}_{i,t} \in Gr_{\lambda_i},$$ а каждому граничному сегменту $ \lambda^{-}_{i,t}$ --- четыре: $$\lambda^{1}_{i,t}, \lambda^{2}_{i,t}, \lambda^{3}_{i,t}, \lambda^{4}_{i,t} \in Gr_{\lambda_i}.$$ Теперь расставим на ребрах $\lambda^{v}_{i,t}$ графа $Gr_{\lambda_i}$ буквы
	$$b^{i}_1, \ldots, b^{i}_{n^b_i}, w^{i,r}_{n^b_i+1}, w^{i,l}_{n^b_i+1}, \ldots, w^{i,r}_{n^w_i+n^b_i}, w^{i,l}_{n^w_i+n^b_i}$$ по таблицам на рис. \ref{Graphes}. Теперь повторим Шаг 2 для всех значений $i \in 1, \ldots, n$ и для всех значений $t \in 1, \ldots, n^w_i+n^b_i$. Проведем склейку по совпадающим буквам. Выкалываем все черные точки графов $Gr_{\lambda_i}$, соответствующие особым точками биллиарда $\varOmega$.
\end{enumerate}

Теперь нужно указать способ, которым цилиндры $C_1 \cup \ldots \cup C_{t+s}$ приклеиваются к графам $Gr_i$. 

\textbf{Алгоритм 2. Склейка графов $Gr_i$ и цилиндров (двумерных прообразов при естественной проекции $\pi$ элемен\-тарных биллиардов $\varSigma_j$ без границ) $C_1, \ldots, C_{\nu+\xi}$ на произвольном не седловом уровне допол\-нительного интеграла $\varLambda = \alpha \neq b$.}
\begin{enumerate}
	\item [Шаг 1.] Рассмотрим два двумерных цилиндра $S^1 \times I$ для каждого элемента фокального разбиения $\varSigma'_j$ области возможного движения $\varOmega_{\alpha}$ (см. главу 4.2). Обозначим один из них через ${C_j}^L$, а другой --- $C_j^R$. Будем помечать граничные окружности каждого цилиндра буквами $\lambda^{v}_{i,t}$ ($v=1,2,3,4,\uparrow, \downarrow$) по следующему правилу: 
	Рассмотрим элементарный биллиард $\varSigma'_{j}$ и одну из его дуг граничных гипербол с параметром $\lambda_i$. Пусть на этой границе биллиарда $\varSigma'_{j}$ отмечены $L+L'$ сегментов граничных гипербол $$\lambda^{+,-}_{i,1}, \ldots, \lambda^{+,-}_{i,L+L'},$$ где на сегменте выставлен либо символ <<$+$>> (таких сегментов $L$), либо символ <<$-$>> (таких сегментов $L'$). Рассмотрим одну из граничных окружностей $S_R^1$ цилиндра $C^R_{j}$ (аналогично окружность $S_L^1$ цилиндра $C^L_{j}$). Разделим окружность $S_R^1$ на $2L+2L'$ частей. Выберем точку на окружности $S_R^1$ на границе любых из $2L+2L'$ частей. Будем ставить в соответствие частям окружности $S^1_R$ ($S^1_L$) сегменты $\lambda^{+,-}_{i,t}$. Поставим в соответствие первому $\lambda^{+,-}_{i,1}$ две части окружности: справа и слева от отмеченной на $S^1_R$($S^1_L$) точки. Следующему сегменту $\lambda^{+,-}_{i,t}$ будут соответствовать две следующие части окружности $S^1_R$ ($S^1_L$): опять же справа и слева от точки, и.т.д. В результате каждому сегменту граничной гиперболы $ \lambda^{+,-}_{i,t}$ поставим в соответствие четыре сегмента граничных окружностей (отрезка) $S_R^1$ и $S_L^1$. Соответствующие сегменту $ \lambda^{+}_{i,t}$ отрезки помечаются следующим образом: на обоих цилиндрах ставятся одинаковые буквы $\lambda^{\uparrow}_{i,t}, \lambda^{\downarrow}_{i,t}$. Соответствующие сегменту $ \lambda^{-}_{i,t}$ отрезки помечаются следующим образом: на $S_R^1$ ставим $\lambda^{1}_{i,t}, \lambda^{2}_{i,t}$, а на $S_L^1$ ставим $\lambda^{3}_{i,t}, \lambda^{4}_{i,t}$.	
	\item[Шаг 2.]  Повторим Шаг 1 для всех элементарных биллиардов $\varSigma'_j$ из фокального разбиения области возможного движения $\varOmega_{\alpha}$ на элементарные биллиарды. 
\end{enumerate}

\begin{proof}
	Рассмотрим однородно-гиперболический биллиард $\varOmega$, его подобласть $\varOmega_{\lambda_i} \subset \varOmega$ и разбиение области $\varOmega_{\lambda_i}$ на элементарные биллиарды $\varSigma'_1, \ldots, \varSigma'_{t+s}$. Проведем дугу гиперболы с параметром $\lambda_i$. 
	
	Зафиксируем произвольное значение второго интеграла $\varLambda = \alpha, \alpha \in [\lambda_i - \varepsilon, \lambda_i + \varepsilon]$. Рассмотрим область возможного движения $\varOmega_{\alpha}$, находящуюся внутри <<полосе>> $\varOmega_{\lambda_i}$, а именно, рассмотрим область $\tilde \varOmega_{\alpha} = \varOmega_{\lambda_i} \cap \varOmega_{\alpha}$ и ее разбиение на элементарные биллиарды (см. 4.2). 
	В следствие теоремы Якоби-Шаля [1], в любой внутренней точке $x$ биллиарда $\varOmega$ можно выбрать четыре различных вектора $v_1, v_2, v_3, v_4$ скорости, направленных по касательной к каустике. 
	Расслоим область $\tilde \varOmega_{\alpha}$ на невырожденные гиперболы из софокусного семейства. Рассмотрим множества в $\tilde U_{\lambda_i}$, задаваемые как пары $(x \in \varSigma'_j, v_1)$, где $v_1$ --- вектор, направленный от левого фокуса и пары $(x \in \varSigma'_j, v_2)$, где $v_2$ --- вектор, направленный к правому фокусу. На эллиптических границах биллиардной области $ \varOmega_{\alpha}$ множества пар $(x, v_1)$ и $(x, v_2)$ склеятся по биллиардному закону, а, следовательно, множества пар $(x \in \varSigma'_j, v_1)$ и $(x \in \varSigma'_j, v_2)$ склеются в $\tilde U_{\lambda_i}$ по прообразу этой границы при естественной проекции $\pi$. Пары $(x \in \varSigma'_j, v_3)$ и $(x \in \varSigma'_j, v_4)$ склеются аналогично на гиперболических границах биллиарда. Теперь осталось заметить, что $(x \in \varSigma'_j, x \notin \partial \varSigma'_j, v_l), l = 1,2,3,4$ гомеоморфно двумерному диску $I \times I$, так как все элементарные биллиарды разбиения имеют форму квадрата с границами из эллипсов и гипербол. В результате, заметим, что каждому элементарному биллиарду разбиения (без границы) соответствует два двумерных цилиндра $C^R_j$ и $C^L_j$.  
	\begin{enumerate}
		\item Трехмерный комплекс  $U_{\lambda_i} \setminus \tilde U_{\lambda_i}$ послоен на двумерные комплексы, соот\-вет\-ствующие значениям второго интеграла $\varLambda = \alpha, \alpha \in [\lambda_i - \varepsilon, \lambda_i + \varepsilon]$. В силу выбора достаточно малого $\varepsilon$ в области возможного движения при таких значениях интеграла будет лежать одинаковое количество вершин углов $3\pi/2$, а следовательно, в силу теоремы 2 на каждом уровне $\varLambda = \alpha$ в $U_{\lambda_i}/\tilde{U_{\lambda_i}}$ будут лежать поверхности одного рода (до разрезания). Дальше остается только применить Утверждение \ref{CutBilliard}.
		\item Рассмотрим дополнительный интеграл $\varLambda$. Если $\varLambda = \alpha, \alpha \in [\lambda_i-\varepsilon, \lambda_i]$, то дуга гиперболы $\lambda_i$ попадает в область возможного движения. Рассмотрим уровень $\varLambda = \alpha$, на дуге гиперболы $\lambda_i$ следующим образом определены вектора скорости: один вектор для вершин углов, два вектора для точек, лежащих на границе биллиарда $\varOmega$ и четыре вектора для точек, лежащих вне границы биллиарда. Прообразом дуги гиперболы $\lambda_i$ будет граф $\pi^-1(\lambda_i) |_{\varLambda = \alpha} \cong Gr_i$, причем этот граф не меняется при изменении $\alpha$ в пределах отрезка $[\lambda_i-\varepsilon, \lambda_i]$, так как количество прообразов при естественной проекции $\pi$ у точки зависит только от ее типа (граничная, угловая или особая) в биллиарде $\varOmega$. При $\varLambda = \alpha, \alpha \in (\lambda_i, \lambda_i+\varepsilon]$ гипербола $\lambda_i$ не попадает в область возможного движения $\varOmega_{\alpha}$, следовательно, прообраз будет пустым. 
		\item В силу рассуждения выше и его независимости от значения второго интеграла, если элементарный биллиард $\varSigma'_{j}$ попадает в область возможного движения при заданном значении интеграла $\varLambda$, то его прообразом при естественной проекции $\pi$ в $\tilde U_{\lambda_i}$ будут два цилиндра. После вырезания дуги гиперболы $\lambda_i$, область $\varOmega_{\lambda_i}$ распадется на элементарные биллиарды $\varSigma'_1, \ldots, \varSigma'_{\nu+\xi}$. Пока дополнительный интеграл $\varLambda \leq \lambda_i$, то в область возможного движения попадают все элементарные биллиарды $\varSigma'_1, \ldots, \varSigma'_{\nu+\xi}$, а после того, как интеграл $\varLambda$ превысит значение $\lambda_i$ --- только $\nu$ элементарных биллиардов, лежащих левее внутри гиперболы с параметром $\lambda_i$. 
		\item На каждом уровне $\varLambda = \alpha$ происходит склейка векторов $(x,v_u), x \in \lambda_i, u = 1,2,3,4$ по биллиардному закону. Это склейка поднимается естественной проекцией $\pi$ до послойной склейки комплекса $T_i$ и комплекса $\tilde U_{\lambda_i} \setminus T_i |_{\varLambda = \alpha, \alpha \in [\lambda_i - \varepsilon, \lambda_i]}$. Если $\varLambda = \lambda_i$, то в область возможного движения попадают только элементарные биллиарды фокального разбиения $\varSigma'_1, \ldots, \varSigma'_{\nu}$ и дуга гиперболы $\lambda_i$, так как только они лежат внутри гиперболы $\lambda_i$. Если $\varLambda < \lambda_i$, то в область возможного движения попадают все элементарные биллиарды $\varSigma'_1, \ldots, \varSigma'_{\nu+\xi}$. Конструкция склейки по биллиардному закону не зависит от уровня интеграла $\varLambda$, а потому, нужно выставить метки $\lambda^{+,-}_{i,t}$ на граничной дуге $\lambda_i$ всех элементарных биллиардов  $\varSigma'_1, \ldots, \varSigma'_{\nu+\xi}$, но при $\varLambda = \lambda_i$ последние $\xi$ элементарных биллиардов разбиения не будут приклеиваться к графу $Gr_i$. Зафиксируем произвольное значение интеграла $\varLambda = \alpha$. Рассмотрим множества $\tilde{\lambda^u_i} = ((x,v_u) \in Q^3| x \in \lambda_i, u = \rightarrow, \leftarrow)$ и разделим $\tilde{\lambda^u_i}$ на сегменты $\tilde{\lambda^u_{i,t}}$, аналогично Алгоритму 1. Поскольку на сегментах $\tilde{\lambda^u_{i,t}}$, которые помечены символом <<$+$>>, происходит склейка по биллиардному закону, то к этому множеству приклеиваются граничные окружности цилиндров $C^R_j$ и $C^L_j$ и склеиваются друг с другом. Если сегмент $\lambda_{i,t}$ помечен знаком <<$-$>>, то на нем склейки не происходит и, в результате, $\tilde{\lambda^u_{i,t}}$ не склеиваются друг с другом, но к ним приклеиваются граничные окружности цилиндров $C^R_j$, $C^R_{j'}$ и $C^L_j$, $C^L_{j'}$, где $\varSigma'_j$ и $\varSigma'_{j'}$ --- элементарные биллиарды с общим граничным сегментом $\lambda_{i,t}$. Двумерные цилиндры $C^R_j$, $C^R_{j'}$ приклеиваются по граничной окружности к  $\tilde{\lambda^u_{i,t}}$, где $u = (\rightarrow)$, а левые цилиндры соответственно к $\tilde{\lambda^u_{i,t}}$, где $u = (\leftarrow)$. Легко заметить, что Алгоритм 2 описывает идентичный механизм склейки. 
	\end{enumerate}
\end{proof}
\begin{figure}[h!]
	\begin{center}
		\begin{minipage}[h]{1.0\linewidth}
			\includegraphics[width=0.9\linewidth]{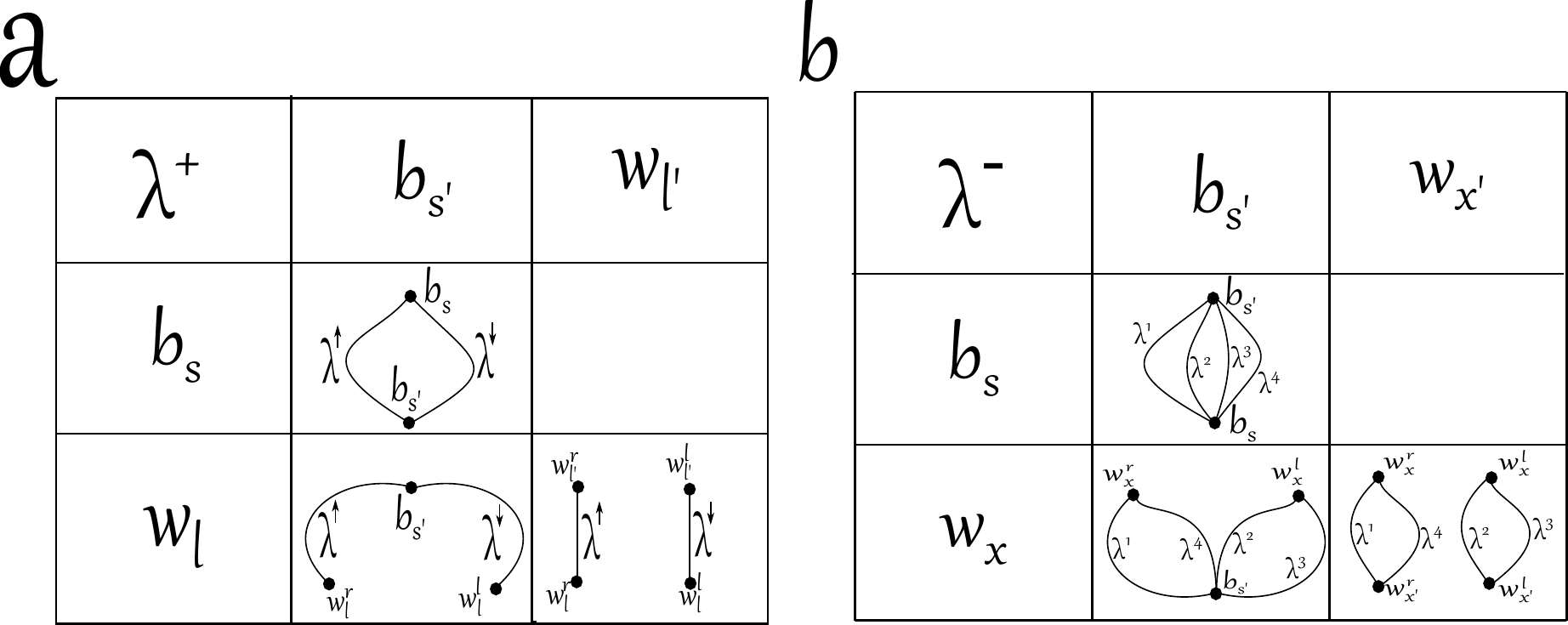}
			\caption{Таблица построения граничных графов для Алгоритмов 1, где a --- для сегментов дуги $\lambda_{i,t}$, отмеченных знаком <<+>>, а б --- для сегментов, отмеченных знаком <<->>.} 
			\label{Graphes} 
		\end{minipage}
	\end{center}
\end{figure}
\begin{comment}
Из доказательства и утверждения \ref{CutBilliard} легко видеть, что в случае, если на обоих (на правой и на левой) дугах гиперболы $\lambda_i$ лежат особые точки однородно-гиперболического биллиарда $\varOmega$, то комплекс $U_{\lambda_i} \setminus \tilde U_{\lambda_i}$ разрежется по $\nu+\xi$ ручками, где $\nu$ --- число компонент связности правой граничной дуги гиперболы $\lambda_i + \varepsilon$, а $\xi$ --- левой. Комлпекс $\tilde U_{\lambda_i}$ будет состоять из двух несвязных частей, каждая из которых построена в точности по теореме 3.
\end{comment}
\begin{comment}
Для однородно-эллиптических биллиардов, в силу односвязности, теорема 3 будет верна без изменений.
\end{comment}
\section{Особый слой $\varLambda = b$. Описание топологии атомов $U$.}

Перейдем к рассмотрению окрестности $U \subset Q^3$ особого слоя $\varLambda = b$. Совместные поверхности уровня дополнительного интеграла при $\varLambda < b$ --- несвязное объединение поверхностей рода меньшего или равного сложности биллиарда $k$, при этом траектории биллиарда на таких уровнях касаются интегральных эллипсов, прижатых к сегменту фокальной прямой между фокусами. Поверхности уровня дополнительного интеграла при $\varLambda > b$ --- несвязное объединение поверхностей рода меньшего или равного сложности биллиарда $k$, при этом траектории биллиарда касаются интегральных гипербол, прижатых к сегментам фокальной прямой вне фокусов. В классической теории (элементарных биллиардов, см. [2]) была верна теорема, описывающая строение 3-атомов $U$ через 2-атомы. Представим аналогичный результат для невыпуклых биллиардов:  сначала дадим определение 2-атомов, потом представим теорему сохранения рода, описывающую случаи, когда сумма родов поверхностей рода на совместной поверхности уровня меняется при переходе через $\varLambda = b$ и, наконец, будет представлена теорема, описывающая топологию трехмерного комплекса $U$. 

\subsection{Определение 2-атомов.}
\begin{definition}
Двумерным атомом называется пара $(P_2, K)$, где $P_2$ --- связная компактная поверхность с краем, ориентируемая или неориентируемая, а $K$ --- связный граф в ней такой, что выполняются следующие условия.
\begin{enumerate}
\item Либо K состоит только из одной точки, т.е. изолированной вершины степени ноль, либо
все вершины графа K имеют степень 4.
\item Каждая связная компонента множества $P_2\setminus K$ гомеоморфна кольцу $S_1 \times (0, 1]$ и множество этих колец можно разбить на два класса --- положительные кольца и отрицательные кольца так, так чтобы:
\item К каждому ребру графа $K$ примыкало ровно одно положительное кольцо и ровно одно отрицательное кольцо.
\end{enumerate}
\end{definition}
При этом атомы обычно рассматривают с точностью до естественной эквивалентности: два атома $(P_2, K)$ и $(P'_2, K')$ эквивалентны, если существует гомеоморфизм, переводящий $P'_2$ в $P_2$, и $K'$ в $K$.

Рассмотрим встречающиеся в плоских компактных биллиардах двумерные атомы: атом $A$, атом $B$, атом $C_2$ и атом $D_1$. В силу теоремы А.Т. Фоменко (см. теорему 3.3 в 2) трёхмерные атомы $A$, $B$, $C_2$ и $D_1$ получаются из их двумерных экземпляров прямым умножением на окружность $S^1$ (это утверждение неверно для атомов <<со звёздочками>>). 

В случае биллиардов критическая окружность на атомах $\varLambda = b$ есть в точности прообраз фокальной прямой при естественной проекции $\pi$, так как точки фокальной прямой могут быть оснащены только двумя векторами скорости на этом уровне --- от правого фокуса или к правому фокусу.   

В дальнейшем нам понадобится разрезать биллиарды по фокальной прямой, а поэтому представим <<эквивалентную>> операцию на 2-атомах. 
\begin{definition}
	Рассмотрим однородный биллиард $\varOmega$. Разрежем его по фокальной (введем на ней биллиардный закон) оси. Обозначим через $\varOmega^{top}$ --- биллиард в области выше фокальной оси, а через $\varOmega^{bot}$ --- ниже фокальной оси. Атомы этих биллиардов будем также помечать верхними индексами $top$ и $bot$.
\end{definition}
\begin{statement}
Рассмотрим элементарный биллиард $\varSigma$ без фокусов, внутренность которого имеет непустое пересечение с фокальной прямой. Пусть особый слой $G$ 3-атома $U$ биллиарда $\varSigma$ получается из 2-атома $V$ умно\-жением на окруж\-ность $S^1$. Тогда объединение особых слоев $G^{top}$ и $G^{bot}$ 3-атомов $U^{top}$ и $U^{bot}$ будет получаться домножением атома $K$, разрезанного по вершинам, на окружность.

\end{statement} 
\begin{proof}
	Рассмотрим 3-атом $U \cong V \times S^1$ и его проекцию $p: U \rightarrow V$ на 2-атом. В вершины графа $K$ в 2-атоме $V$ при проекции $p$ проектируются в точности точки, соответствующие точкам критической окружности атома $U$. Как было замечено --- это в точности точки, которые при естественной проекции $\pi$ проектируются на фокальную прямую.   
\end{proof}
Таким образом, мы можем разбить кольца, которые получаются после разрезания атома $K$ по вершинам на два класса --- проектирующиеся в биллиард $\varSigma^{top}$ и в $\varSigma^{bot}$.
\subsection{Теорема сохранения рода.}
\begin{theorem}\label{GenusSave}
	Рассмотрим биллиард $\varOmega$ и значения дополнительного интеграла $\varLambda \in [b - \varepsilon, b + \varepsilon]$. Пусть на совместных поверхностях уровня дополнительного интеграла $\varLambda < b$ лежат поверхности рода $g_1, \ldots, g_f$. А на поверхностях уровня $\varLambda > b$ лежат поверхности рода $g'_1, \ldots, g'_s$. В этом случае, $$g_1 + \ldots + g_f = g'_1 + \ldots + g'_s$$ тогда и только тогда, когда вершины углов $3\pi/2$ не лежат на фокальной прямой. 
\end{theorem}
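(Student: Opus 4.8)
The plan is to deduce the statement from the genus formula of Theorem~\ref{thDR}, reducing it to a comparison, on the two sides of $\varLambda=b$, of the common genus of a regular fibre and of the number of regular fibres. By Definition~\ref{DefHom} the billiard $\varOmega$ is homogeneous, and the homogeneous-elliptic case is obtained from the homogeneous-hyperbolic one by interchanging the roles of the focal segment between the foci and of the focal rays lying outside the foci (equivalently, of $\varLambda<b$ and $\varLambda>b$); so I assume $\varOmega$ is homogeneous-hyperbolic. The first observation is purely plane-geometric: a $3\pi/2$-vertex of $\varOmega$ lies on the focal line if and only if it lies strictly between the two foci. Indeed, such a vertex is either the intersection of an ellipse arc with a hyperbola arc, or the intersection of a hyperbola arc with the degenerate-ellipse (focal-segment) arc; a confocal hyperbola meets the focal line only between the foci while a confocal ellipse meets it only outside them, so the first case never produces a point of the focal line, whereas the second always produces one strictly between the foci. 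Write $k$ for the number of $3\pi/2$-vertices of $\varOmega$ and $m$ for the number of them on the focal line.

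Next I apply Theorem~\ref{thDR} to describe the regular fibres. For $\varepsilon$ small the integral ellipse at $\varLambda=b-\varepsilon$ is an arbitrarily thin neighbourhood of the focal segment, so a $3\pi/2$-vertex is outside it precisely when it is not on that segment; hence the number $k'$ occurring in Theorem~\ref{thDR} equals $k-m$, and every connected component of $\{\varLambda=b-\varepsilon\}$ is a closed orientable surface of genus $g_-=k-m+1$. Dually, at $\varLambda=b+\varepsilon$ the integral hyperbola is a thin neighbourhood of the two focal rays, and its interior --- the region between the two branches --- contains every $3\pi/2$-vertex (those off the focal line, and also those between the foci), so there $k'=k$ and every component of $\{\varLambda=b+\varepsilon\}$ has genus $g_+=k+1$. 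In particular all fibres on a given side have the same genus, and $g_+-g_-=m$.

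The remaining point --- which I expect to be the crux --- is that the number $f$ of connected components of $\{\varLambda=b-\varepsilon\}$ equals the number $s$ of connected components of $\{\varLambda=b+\varepsilon\}$. The plan for this is to cut $\varOmega$ along all hyperbola arcs carrying $3\pi/2$-vertices and along the focal line, decomposing $\varOmega$ into elementary billiards whose $\varLambda=b$-neighbourhoods are described by Theorem~\ref{thVed} (and, for the pieces meeting the focal line, by the preceding Statement on the halves $\varSigma^{top},\varSigma^{bot}$), and then to reassemble. The lift $\pi^{-1}$ of a quadric arc shared by two adjacent pieces is a union of circles along which the billiard law glues the bounding fibre-circles of the two pieces in pairs, and the same holds on $\varLambda=b$ for a sub-arc of the focal line; tracking which fibre-circles of the pieces get fused into one component of $\{\varLambda=b\pm\varepsilon\}$ shows that this combinatorics is governed by one and the same incidence data of the decomposition on both sides of $b$. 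Homogeneity is used exactly here: it prevents the focal line from cutting any piece into an ``inside-the-foci plus outside-the-foci'' configuration, which is the only situation in which the set of surviving and glued sheets could differ between $\varLambda<b$ and $\varLambda>b$.

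Granting $f=s$, one gets $\sum_{i=1}^{f}g_i=f(k-m+1)$ while $\sum_{j=1}^{s}g'_j=f(k+1)$, so the two sums coincide if and only if $fm=0$, i.e. (since $f\ge 1$) if and only if $m=0$ --- that is, if and only if no $3\pi/2$-vertex lies on the focal line; the homogeneous-elliptic case follows by the interchange described above. The genuine obstacle is therefore the equality $f=s$: Theorem~\ref{thDR} gives the genus of a fibre but not their number, and local bifurcations at $\varLambda=b$ inside a single piece (of atom-$B$ type, say) do change the fibre count, so one must show that the global count is nonetheless unchanged across $\varLambda=b$, which is where the homogeneity of $\varOmega$ and the absence of focal $3\pi/2$-vertices enter.
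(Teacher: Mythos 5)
Your first half --- using homogeneity to reduce ``vertex on the focal line'' to ``vertex on the focal segment between the foci'', and observing that for small $\varepsilon$ the integral ellipse at $\varLambda=b-\varepsilon$ excludes from the region of possible motion exactly the $3\pi/2$-vertices lying on that segment, while at $\varLambda=b+\varepsilon$ all vertices lie inside the integral hyperbola --- is precisely the mechanism of the paper's proof: there the change of the genus data across $\varLambda=b$ is identified with vertices being swallowed by the degenerating caustic, and the arbitrariness of $\varepsilon$ converts this into the condition that the vertices lie on the focal line.

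The genuine gap is your reduction to the equality $f=s$, which you only sketch and yourself call the crux. That step would in fact fail: the number of fibre components is not preserved across $\varLambda=b$ even for homogeneous billiards with no vertices on the focal line. Already for a domain of type $A_0$ (homogeneous-hyperbolic, containing a piece of the inter-focal segment in its interior, no $3\pi/2$-vertices) the level $\varLambda=b$ carries the 3-atom $B$ by Theorem \ref{thVed}, so there are two tori on one side of $b$ and one on the other; the same jump occurs whenever a homogeneous-hyperbolic billiard contains the inter-focal segment in its interior, regardless of where its $3\pi/2$-vertices sit. Hence $f=s$ cannot be ``granted'', and the bookkeeping $\sum g_i=f(k-m+1)$, $\sum g'_j=s(k+1)$ does not close the argument. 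A related over-reading: Theorem \ref{thDR} does not assign all components one common genus $k'+1$ with a single global $k'$; the count of vertices must be taken in the connected component of the region of possible motion lying under the given fibre component, so different components may have different genera. The paper's proof never passes through the numbers $f$ and $s$ at all: it applies Theorem \ref{thDR} componentwise and equates conservation of the genus data directly with conservation of the number of $3\pi/2$-vertices inside the region of possible motion at $b\pm\varepsilon$, which is then settled by the same swallowing argument you give. The component-count subtlety you flag is a real feature of the bifurcation at $\varLambda=b$ (it is exactly what atoms of type $B$ encode), but an argument routed through $f=s$ cannot be completed; you would have to either track explicitly how fibre components merge and split at $\varLambda=b$, or avoid the total sums over components as the paper does.
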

\begin{proof}
	Согласно теореме 2 топология совместных поверхностей уровня интег\-ралов в $Q^3$ описывается количеством связных компонент области возможного движения и коли\-чест\-вом углов $3\pi/2$, лежащим внутри этих областей. А именно, каждой связной компоненте повер\-хности уровня соответствует сфера с количеством ручек (и выколотых точек), равным $k'+1$, где $k'$ --- количество вершин углов $3\pi/2$ внутри области возможного движения. Условие сохранения рода в точности означает, что количество вершин углов $3\pi/2$ в области возможного движения при фиксированном дополнительном интеграле $\varLambda = b - \varepsilon$ равно количеству вершин углов $3\pi/2$ в области возможного движения при фиксированном дополнительном интеграле $\varLambda = b + \varepsilon$. Интегральными квадриками на таких уровнях интеграла будут: эллипс с параметром близким к $b$ (<<прижатый>> к сегменту фокальной прямой между фокусами) и гипербола с параметром близким к $b$ (<<прижатая>> к участку фокальной прямой вне фокусов). Сумма родов может изменится только в том случае, если вершины углов $3\pi/2$ лежат внутри такого интегрального или такой интегральной гиперболы. В силу произвольности $\varepsilon$ это условие означает, что вершины углов обязаны лежать на фокальной прямой.
\end{proof}
\begin{comment}
В случае, когда вершины углов $3\pi/2$ лежат на фокальной прямой перестройки двух разных типов (с падением ранга и без) происходят <<одновременно>> на уровне $\varLambda = b$.
\end{comment}
\subsection{Основная теорема.}
Аналогично случаю $\varLambda = \lambda_i$ определим двумерный подкомплекс $T_i$ в трехмерном комплексе $U$, который будет соответствовать дуге граничной гиперболы $\lambda_i$. 
\begin{definition}
		Рассмотрим однородный биллиард $\varOmega$ с выбранным на нем разбиением $\varSigma_1, \ldots, \varSigma_N$ и рассмотрим двумерную клетку $T_i \subset U$, состоящую из таких пар $(x,v)$, которые проектируются естественной проекцией $\pi$ на дугу квадрики $\lambda_i$. Клетки выбраны как показано в главе 5. 
\end{definition}
Рассмотрим элементарные биллиарды разбиения $\varSigma_1, \ldots, \varSigma_N$. Топология 3-атомов $U$ для каждого из таких элементарных биллиардов полностью описывается теоремой \ref{V}. Воспользуемся теоремой и обозначим соответствующие 2-атомы для таких биллиардов через $V_{j}$. 

Рассмотрим однородный биллиард $\varOmega$, каустику с параметром $\lambda = b - \varepsilon$ и интегральную гиперболу семейства с параметром $\lambda = b + \varepsilon$. Теперь рассмотрим области возможного движения при таких значениях дополнительного интеграла $\varLambda$. Обозначим область возможного движения для $\varLambda = b - \varepsilon$ (область биллиарда $\varOmega$ вне эллипса, <<близкого>> к фокальной оси) через $\varOmega_{b-\varepsilon}$. Аналогично область возможного движения при значениях интеграла $\varLambda = b + \varepsilon$ (между дуг гиперболы, <<прижатых>> к фокальной прямой) обозначим через $\varOmega_{b+\varepsilon}$.
Заметим, что области $\varOmega_{b+\varepsilon}$ и $\varOmega_{b-\varepsilon}$ могут оказаться несвязанными. 

Теперь рассмотрим разбиение $\varSigma_1, \ldots, \varSigma_{N}$ биллиарда $\varOmega$. Выберем разбиения областей возможного движения $\varOmega_{b+\varepsilon}$ и $\varOmega_{b-\varepsilon}$ на элементарные билиларды $\varSigma^{>}_1, \ldots, \varSigma^{>}_{N'}$ и $\varSigma^{<}_1, \ldots, \varSigma^{<}_{N''}$ соответственно (см. опр. разбиения). 

Сформулируем основной результат --- теорему описания топологии седлового особого слоя для однородного биллиарда. Графическую интерпретацию теоремы можно найти на рис. \ref{Table2}.
\begin{theorem}
	Рассмотрим однородно-гиперболический биллиард $\varOmega$ с выбранным на нем разбиением $\varSigma_1, \ldots, \varSigma_N$. Рассмотрим окрестность значений дополнительного интеграла $b - \varepsilon \leq \varLambda \leq b + \varepsilon$ и соответствующий этой окрестности 3-атом $U$. Вырежем из трехмерного комплекса $U$ все двумерные подкомплексы $T_i$ для всех дуг граничных гипербол $\lambda_1, \ldots, \lambda_n$ разбиения $\varSigma_1, \ldots, \varSigma_N$. Тогда:
	\begin{enumerate}
		\item Двумерный комплекс $T_i$ расслаивается на одномерные графы следующим образом: $T_i |_{\varLambda < b} \cong Gr^{<}_{i}$, $T_i |_{\varLambda = b} \cong Gr^{=}_{i}$, $T_i |_{\varLambda > b} \cong Gr^{>}_{i}$, где графы $Gr^{<}_{i}$, $Gr^{=}_{i}$, $Gr^{>}_{i}$ строятся по алгоритму 4;
		\item Трехмерный комплекс $U \setminus (T_{1} \cup \ldots \cup T_{n}) \cong 2(V_{1} \times I) \cup \ldots \cup 2(V_{N} \times I)$, где объединение несвязно в силу однородности биллиарда $\varOmega$;
		\item Двумерные комплексы $(T_{1} \cup \ldots \cup T_{n})$ приклеиваются к трехмерному комплексу $U \setminus (T_{1} \cup \ldots \cup T_{n})$ послойно (где на 2-атомах выбрано слоение на окружности), таким образом, к графам $Gr^{<}_{i}$, $Gr^{=}_{i}$, $Gr^{>}_{i}$ приклеиваются произведение колец атомов $V_{\varSigma_j}$ на окружности (цилиндры) и произведение графов $K$ этих атомов на окружность. Данная склейка описывается алгоритмом 5.
	\end{enumerate}
\end{theorem}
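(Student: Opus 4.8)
План состоит в том, чтобы почти дословно повторить послойную схему доказательства Теоремы \ref{ThSing1}, заменив картину Якоби--Шаля из четырёх касательных к каустике векторов скорости на оптическую картину при $\varLambda=b$, а ``цилиндр $C_j$'' --- на 2-атом $V_j$. Сначала я бы зафиксировал произвольное $\alpha\in[b-\varepsilon,b+\varepsilon]$ и описал прообраз $\pi^{-1}(\varOmega_\alpha)\cap U$ послойно. При $\alpha\neq b$ область возможного движения $\varOmega_\alpha$ ограничена (вместе с дугами $\partial\varOmega$) каустикой: эллипсом, прижатым к отрезку между фокусами, если $\alpha<b$, и гиперболой, прижатой к фокальным лучам, если $\alpha>b$; в каждой внутренней точке $\varOmega_\alpha$ по Якоби--Шалю четыре вектора скорости, которые попарно склеиваются по закону отражения на эллиптических и гиперболических дугах границы так же, как в Теореме \ref{ThSing1}. При $\alpha=b$ каустика вырождается, и по оптическому свойству софокусных квадрик четыре вектора скорости распадаются на две пары --- вдоль прямой через точку и правый фокус и вдоль прямой через точку и левый фокус, причём на самой фокальной прямой обе прямые (а с ними все четыре вектора) сливаются; это слияние и есть критическая окружность 3-атома, т.е. прообраз фокальной прямой при $\pi$ (как отмечено после определения 2-атомов).

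Дальше я бы доказывал три пункта в порядке (2), (1), (3). Пункт (2): после вырезания $T_1\cup\ldots\cup T_n$ оставшееся проектируется при каждом $j$ на $\varSigma_j$ с удалёнными дугами границы на разрезающих гиперболах $\lambda_1,\ldots,\lambda_n$, но с сохранёнными дугами на $\partial\varOmega$. Над таким куском движение при $\varLambda\in[b-\varepsilon,b+\varepsilon]$ совпадает с движением элементарного биллиарда $\varSigma_j$ возле $\varLambda=b$, поэтому по Теореме \ref{thVed} (и теореме Фоменко о том, что 3-атомы $A,B,C_2,D_1$ суть прямые произведения своих 2-атомов на $S^1$) его полный прообраз есть 3-атом $\varSigma_j$, т.е. $V_j\times S^1$ (для куска с атомом $A^*$ рассуждение проводится с поправкой на самопересечение особого слоя). Удаление двумерных кусков $T_i\cap U_{\varSigma_j}$ --- прообразов дуг разрезающих гипербол на $\partial\varSigma_j$, являющихся ``вертикальными'' гиперболическими слоями, трансверсальными критической окружности, --- разрезает множитель $S^1$; нужно проверить (это трёхмерный аналог Утверждения из \S 7.1 о разрезании атома $K$ по вершинам с домножением на окружность; ср. также Утверждение \ref{CutBilliard}), что получаются ровно две копии $V_j\times I$ --- ``правое'' и ``левое'' семейства $C^R_j,C^L_j$ из доказательства Теоремы \ref{ThSing1}. Объединение несвязно: в силу однородности биллиарда различные $\varSigma_j$ граничат лишь по разрезающим гиперболам, прообразы которых уже вырезаны, а сегментов фокальной прямой вне фокусов нет, так что на уровне $\varLambda=b$ дополнительных склеек не возникает.

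Пункт (1): $T_i$ есть прообраз дуги $\lambda_i$, и, как в Теореме \ref{ThSing1}, число и тип прообразов точки этой дуги зависят только от её типа (обычная граничная, вершина угла, особая точка) и от того, лежит ли точка внутри или вне текущей каустики. Поскольку при переходе $\alpha$ через $b$ каустика меняется качественно (эллипс $\to$ фокальная прямая $\to$ гипербола) и вместе с ней меняется попадающая в область движения часть дуги $\lambda_i$, ограничения $T_i$ на $\varLambda<b$, $\varLambda=b$, $\varLambda>b$ дают, вообще говоря, три разных графа; какие именно куски $\lambda_i$ выживают и сколько прообразов несёт каждый --- это в точности Алгоритм 4, и надо убедиться, что он выдаёт $Gr^{<}_i,Gr^{=}_i,Gr^{>}_i$. Здесь же используется Теорема \ref{GenusSave}: она контролирует, скачет ли род (то есть $k'$) при переходе через $b$, т.е. лежит ли вершина угла $3\pi/2$ на фокальной прямой; если лежит, обе перестройки совершаются одновременно, и алгоритм обязан это учесть.

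Пункт (3): на каждом уровне $\varLambda=\alpha$ отождествление прообразов $(x,v)$ с $x\in\lambda_i$ диктуется только законом отражения (при $\alpha=b$ --- ещё и слиянием на фокальной прямой), и это послойная операция, поэтому она склеивает $T_i$ с $U\setminus(T_1\cup\ldots\cup T_n)$ послойно; на каждом слое склеиваются граничные окружности произведений колец атома $V_j$ на $S^1$ и произведение графа $K$ атома $V_j$ на $S^1$ с графом $Gr^{\bullet}_i$, и остаётся проверить, что правило меток Алгоритма 5 воспроизводит именно эту склейку. Главным препятствием я ожидаю пункт (1)/Алгоритм 4 при $\alpha=b$: требуется аккуратно проследить слияние четырёх векторов скорости в два на фокальной прямой и возможное совпадение вершины угла $3\pi/2$ с фокусом или с точкой фокальной прямой, поскольку там комбинаторика $Gr^{=}_i$ не есть наивный предел $Gr^{<}_i$ и $Gr^{>}_i$ --- именно здесь Теорема \ref{GenusSave} и Утверждение из \S 7.1 делают основную работу.
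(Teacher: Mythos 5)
Ваше предложение по существу совпадает с авторским доказательством: тот же послойный анализ числа векторов скорости (четыре внутри, склейки на границах, слияние на фокальной прямой при $\varLambda=b$), то же отождествление прообраза каждого $\varSigma_j$ с $V_j\times S^1$ через теорему 1 и теорему Фоменко о прямом произведении с последующим распадом на две копии $V_j\times I$ после удаления прообразов разрезающих дуг, та же проверка алгоритма 4 по типам точек дуги $\lambda_i$ и та же обработка уровня $\varLambda=b$ разрезом по фокальной прямой на $\varOmega^{top}$ и $\varOmega^{bot}$ с опорой на утверждение о разрезании графа $K$ по вершинам (раздел 7.1) и склейкой, аналогичной теореме 3. Отличия лишь косметические: в авторском доказательстве теорема 4 о сохранении рода не привлекается, а ваша оговорка про атом $A^{*}$ излишня, так как однородный (в частности, однородно-гиперболический) биллиард не содержит фокусов внутри области, и кусок типа $A_1$ в разбиении возникнуть не может.
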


\begin{figure}[h!]
	\begin{center}
		\begin{minipage}[h]{1.0\linewidth}
			\includegraphics[width=0.9\linewidth]{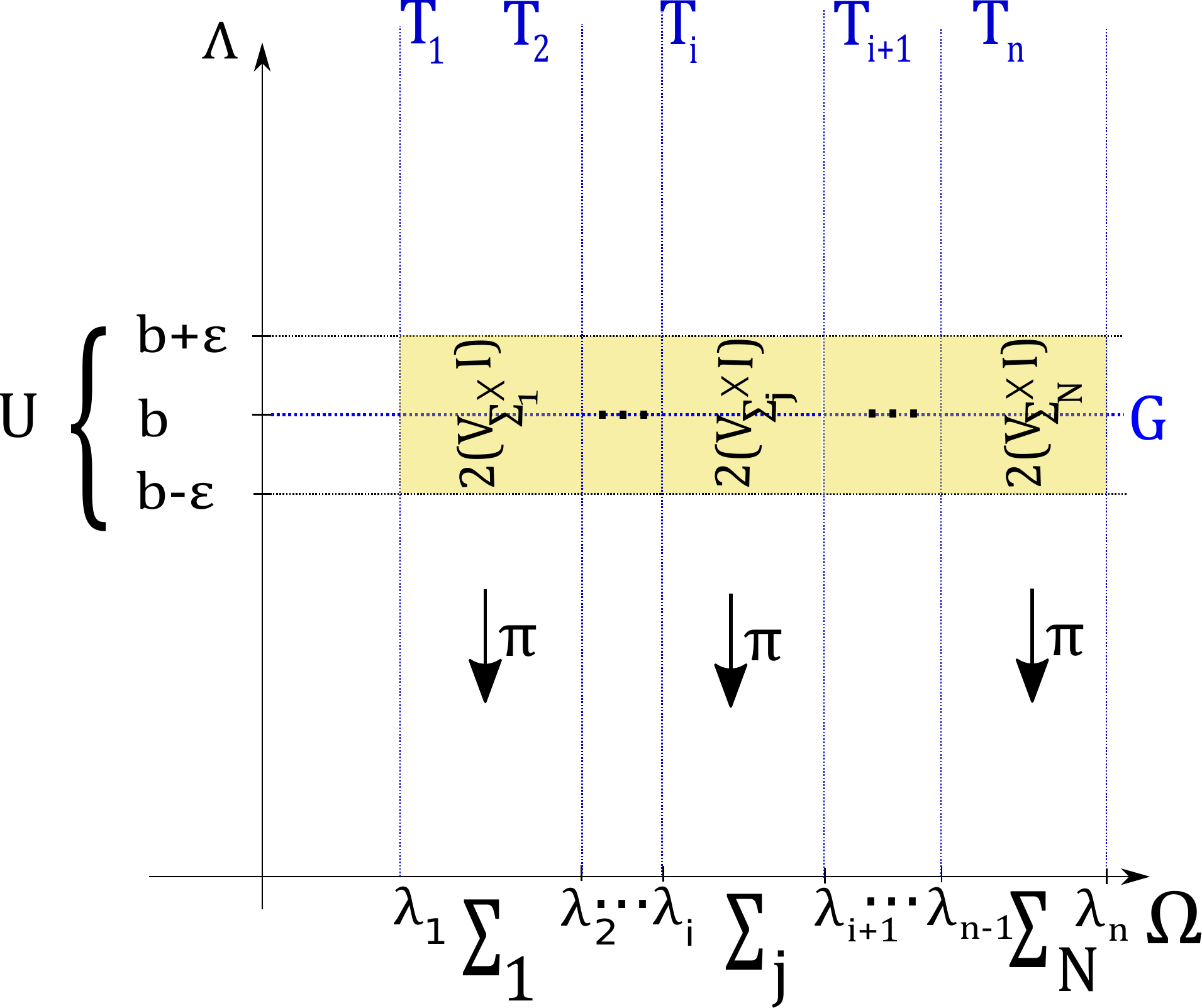}
			\caption{Иллюстрация теоремы 5. По оси абсцисс обозначены значения естественной проекции $\pi$, а по оси ординат --- значения дополнительного интеграла $\varLambda$.} 
			\label{Table2} 
		\end{minipage}
	\end{center}
\end{figure}
\begin{figure}[h!]
	\begin{center}
		\begin{minipage}[h]{1.0\linewidth}
			\includegraphics[width=0.9\linewidth]{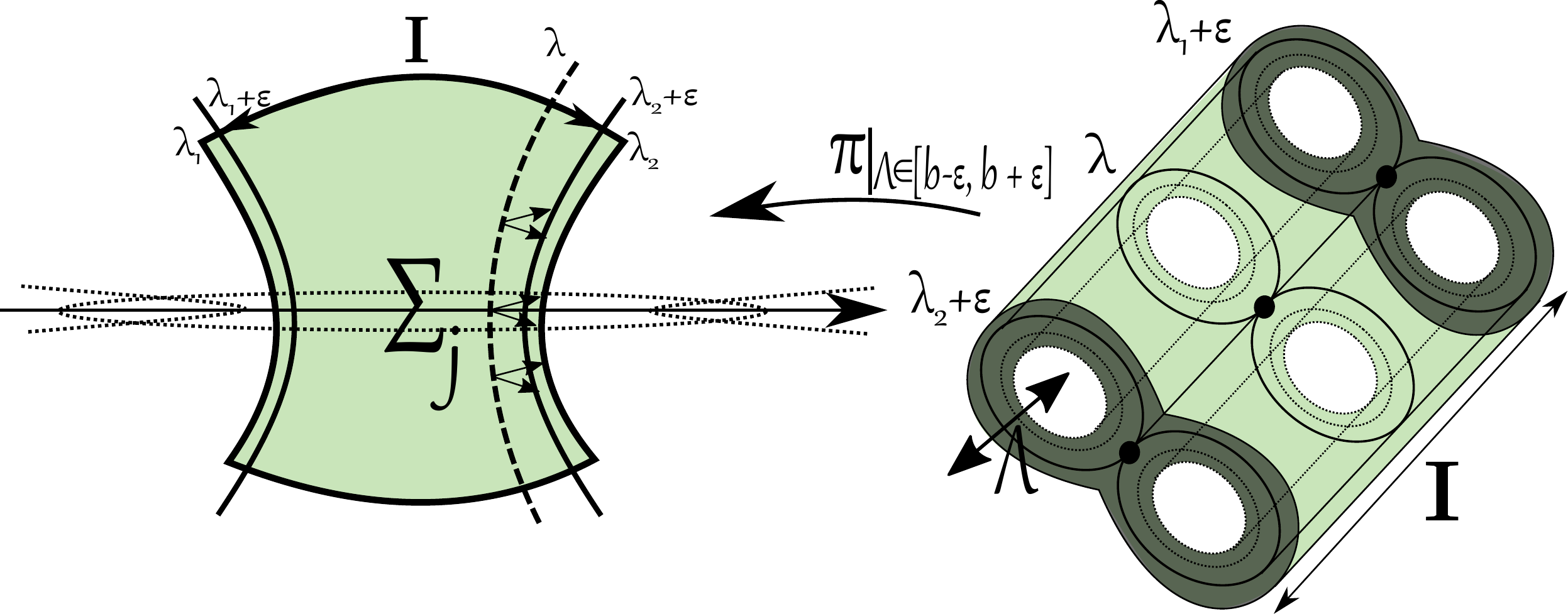}
			\caption{Пример проекции двумерной клетки из $U$ на элементарный биллиард $\varOmega$ разбиения (без сегментов граничных гипербол). Двумерная клетка гомеоморфна $B \times I$, где $B$ --- двумерный атом. В образ клетки попадают только точки, оснащенные векторами, направленными вправо.  } 
			\label{FiberB} 
		\end{minipage}
	\end{center}
\end{figure}

\textbf{Алгоритм 4.}
\begin{figure}[h!]
	\begin{center}
		\begin{minipage}[h]{1.0\linewidth}
			\includegraphics[width=0.9\linewidth]{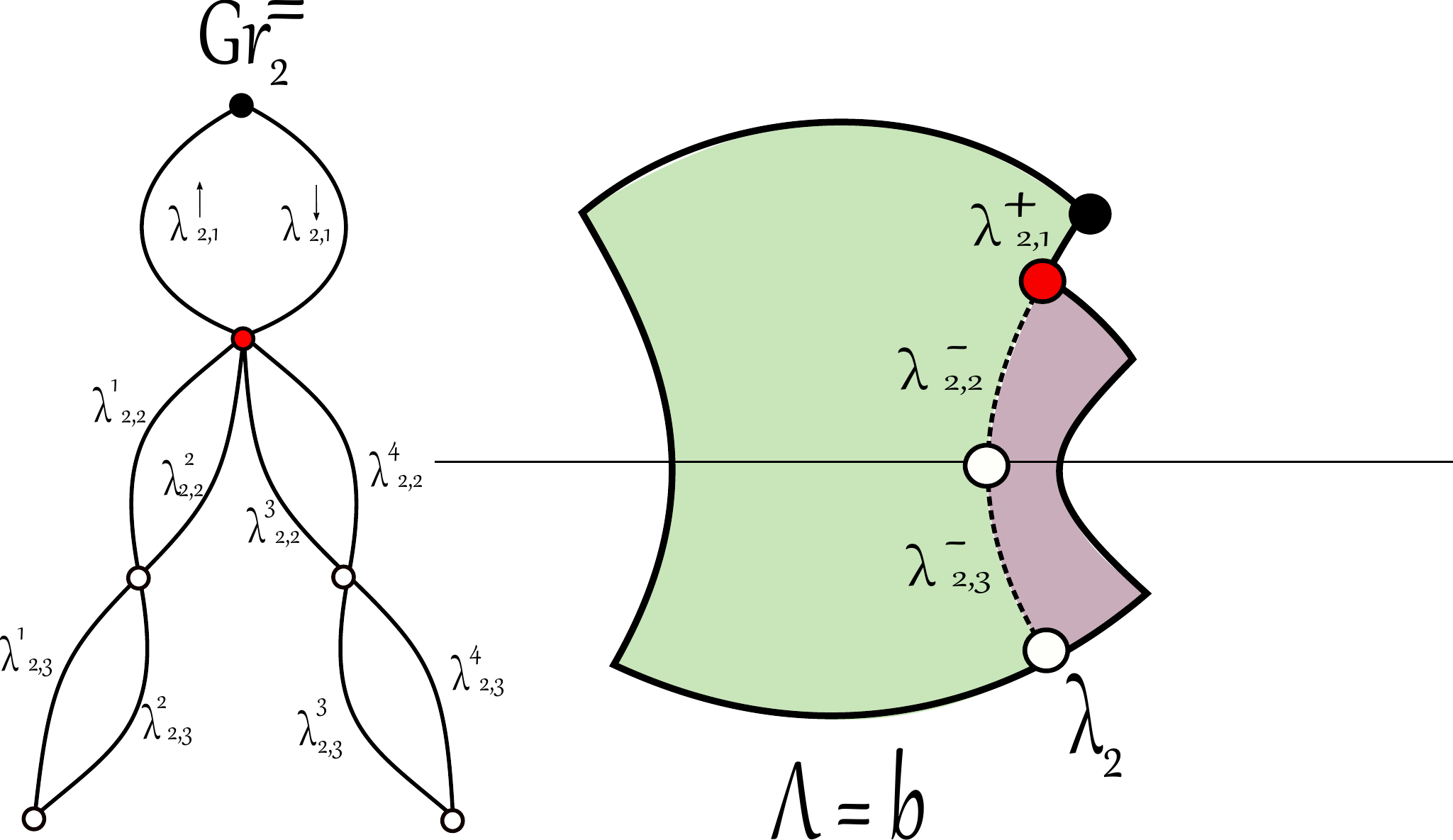}
			\caption{Пример построения графа $Gr^{=}_2$ по Алгоритму 4 для дуги граничной гиперболы $\lambda_2$ биллиарда, изображенного на рисунке.} 
			\label{Alg4} 
		\end{minipage}
	\end{center}
\end{figure}
\begin{enumerate}
	\item Рассмотрим биллиард $\varOmega$ и его разбиение на элементарные биллиарды $\varSigma_1, \ldots, \varSigma_N$. Рассмотрим дуги граничных гипербол такого разбиения $\lambda_1, \ldots, \lambda_n$. Теперь рассмотрим области $\varOmega_{b+\varepsilon}$ и $\varOmega_{b-\varepsilon}$ и их разбиения на элементарные биллиарды $\varSigma^{>}_1, \ldots, \varSigma^{>}_{N}$ и $\varSigma^{<}_1, \ldots, \varSigma^{<}_{N}$;
	\item Построим графы $Gr^{>}_{i}$ и $Gr^{<}_{i}$, так как слои $b-\varepsilon$ и $b + \varepsilon$ не критические, то воспользуемся Алгоритмом 1 для их построения, применив его дважды: для области возможного движения $\varOmega_{b-\varepsilon}$ с зафиксированным дополнительным интегралом $\varLambda = b - \varepsilon$ и для области возможного движения $\varOmega_{b+\varepsilon}$ с зафиксированным дополнительным интегралом $\varLambda = b + \varepsilon$;
	\item Теперь построим граф $Gr^{=}_{i}$. Воспользуемся модифицированным Алгоритмом 1 для дуги граничной гиперболы $\lambda_i$ и биллиардов элементарного разбиения $\varSigma_1, \ldots, \varSigma_{N}$: на шаге 1 дополнительно отметим белыми точками пересечения граничной дуги гиперболы $\lambda_i$ с фокальной прямой. 
\end{enumerate}
\textbf{Алгоритм 5.}
\begin{enumerate}
	\item Рассмотрим элементарный биллиард $\varSigma^{>}_j$ из разбиения $\varSigma^{>}_1, \ldots, \varSigma^{>}_{N'}$. Также рассмотрим положительные кольца $S^1$ 2-атома $V_j$ и построенные алгоритмом 3 графы $Gr^{>}_{i}$ и $Gr^{>}_{i'}$. Здесь $\lambda_i$ и $\lambda_j$ суть границы элементарного биллиарда $\varSigma^{>}_j$. Поставим в соответствие каждому положительному кольцу $S^1$ по два цилиндра $S^1 \times I$ и обозначим их через $\varSigma_j^{R, >}$ и $\varSigma_j^{L, >}$. Применим алгоритм 2 к элементарному биллиарду $\varSigma^{>}_j$ и графам $Gr^{>}_{i}$ и $Gr^{>}_{i'}$.
	\item Аналогично применим алгоритм 2 для элементарного биллиарда $\varSigma^{<}_j$ и графам $Gr^{<}_{i}$ и $Gr^{<}_{{i'}}$.
	\item Разделим биллиард $\varOmega$, 2-атомы всех элементарных биллиардов $\varSigma_j$ и все графы $Gr^{=}_{i}$ на две части. Проведем фокальную прямую и разобьем биллиард $\varOmega$ на два биллиарда: $\varOmega^{top}$ и $\varOmega^{bot}$. Теперь рассмотрим все элементарные биллиарды $\varSigma_j$ из разбиения $\varSigma_1, \ldots, \varSigma_N$ и соответствующие им двумерные атомы $V_j$ c графами $K_j$. Теперь разрежем граф $K_j$ по вершинам и поставим биллиардам $\varOmega^{top}$ и $\varOmega^{bot}$ в соответствие получившиеся кольца $K^{top}_j$ и $K^{bot}_j$ (см. главу 6.1). Граф $Gr^{=}_{i}$ мы также разрежем на два графа $GrTop^{=}_{i}$ и $GrBot^{=}_{i}$ по точкам (белым или черным), соответ\-ствующим точкам пересечения граничной дуги гиперболы $\lambda_i$ с фокальной прямой. 
	
	\item Рассмотрим получившиеся в результате разреза графа 2-атома $K_j$ кольца. Домножим эти кольца на отрезок и рассмотрим два экземпляра итогового произведения. Если на кольце лежала вершина графа $K$, то поставим на одном из ребер получившегося цилиндра букву $\lambda_{0,t}$, где индексом $t$ соответственно пронумеруем участки фокальной прямой между дугами $\lambda_i$. Обозначим получившиеся цилиндры через $$C^R_{1,j}, \ldots, C^R_{\nu,j},C^L_{\nu +1,j}, \ldots, C_{2\nu,j}$$ Разделим получившиеся цилиндры на два класса --- относящиеся к биллиарду $\varOmega^{top}$ (такие пометим символом $~$)  и на относящиеся к биллиарду $\varOmega^{bot}$ (см. главу 6.1).
	
	\item Теперь будем применять Алгоритм 2 по отдельности к биллиардам $\varOmega^{top}$ и $\varOmega^{bot}$. А именно, при помощи Алгоритма 2 приклеим все графы $GrTop^{=}_{i}$ к тем цилиндрам, которые на предыдущем шаге были помечены волной. Аналогично приклеим остальные цилиндры к графам $GrBot^{=}_{i}$. Обозначим получившиеся в результате склейки комплексы $U^{top}$ и $U^{bot}$ соответственно: это суть 3-атомы биллиардов $\varOmega^{top}$ и $\varOmega^{bot}$.  
	\item Теперь склеим $U^{top}$ и $U^{bot}$ по одинаковым буквам $\lambda_{0,t}$.

\end{enumerate}
\begin{proof}
	Рассмотрим однородно-гиперболический биллиард $\varOmega$ и дополнительный интеграл $\varLambda \in [b-\varepsilon, b+ \varepsilon]$.
	
	Сначала убедимся, что Алгоритм 4 действительно строит двумерные подкомплексы $T_i$. Пусть точка $x \in \varOmega$ гиперболы $\lambda_i$ попадает в область возможного движения при некотором значении интеграла  $\varLambda$ близкого или равного $b$, тогда: 
	\begin{enumerate}
	
	\item Если точка $x$ не лежит на границе биллиарда $\varOmega$ или на фокальной прямой, то в ней определены 4 вектора скорости, направленные либо по касательной к каустике ($\varLambda \neq b$), либо по траекториям, проходящим через фокусы ($\varLambda = b$). В алгоритме такая точка $x$ лежит на отмеченном знаком <<$-$>> сегменте;
	\item Если точка $x$ лежит на границе биллиарда $\varOmega$, то, в силу биллиардного закона, в такой точке определены только два вектора скорости. В алгоритме такая точка $x$ лежит на отмеченном знаком <<$+$>> сегменте;; 
	\item Если точка $x$ лежит на фокальной прямой, то на уровне $\varLambda = b$ в ней определены только два вектора скорости, а на всех остальных уровнях дополнительного интеграла --- 4. В алгоритме такие точки дополнительно отмечены белым; 
	\item Если точка $x$ лежит в вершине прямого угла, то ей соответствует один вектор скорости, а если в вершине угла $3\pi/2$, то все пары $(x,v)$ склеятся в $Q^3$. В алгоритме такие точки дополнительно отмечены черным; 
	\item Если точка $x$ границы биллиарда $\varOmega$ лежит на каустике $\lambda = \alpha$, то в ней также будет определён лишь один вектор скорости,  в силу закона отражения. Такие точки в алгоритме также отмечены черным;
	\item Если точка внутренняя точка $x$ биллиарда $\varOmega$ лежит на каустике $\lambda = \alpha$, то в ней также будет определено два вектора скорости,  в силу закона отражения. Такие точки в алгоритме также отмечены белым;
	\end{enumerate}
	Рассмотрим прообраз дуги $\lambda_i$ при фиксированном значении дополнительном интеграла $\varLambda = \alpha \in [b-\varepsilon, b+ \varepsilon]$ и при фиксированном значении интеграла $\varLambda = \beta \in [b-\varepsilon, b+ \varepsilon]$. Обозначим эти (одномерные) прообразы через $Gr_\beta$ и $Gr_\alpha$ соответственно. Заметим, что структура $Gr_\beta$ и $Gr_\alpha$ зависит только от области возможного движения при $\varLambda = \alpha$ и $\varLambda = \beta$, а она, в силу малости $\varepsilon$, зависит только от соотношений $\alpha, \beta$ и $b$. А именно,  $Gr_\beta \cong Gr_\alpha$ в том и только том случае, если либо $\alpha < b$ и $\beta < b$, $\alpha = \beta = b$, либо $\alpha > b$ и $\beta > b$.
	В итоге, алгоритм 4 действительно строит графы $Gr^{<}_{i}$, $Gr^{=}_{i}$ и $Gr^{>}_{i}$, так как структура одномерных комплексов границ меняется только при переходе через критическое значение. 
	
	Теперь перейдём к доказательству второго утверждения теоремы 5:  рассмотрим комплекс $A = U \setminus (T_{1} \cup \ldots \cup T_{n})$. Зафиксируем некоторое значение интеграла $\varLambda = \alpha \in  [b-\varepsilon, b+ \varepsilon]$ и спроецируем комплекс $A$ на биллиард $\varOmega$ естественной проекцией $\pi$. В итоге получится несвязное объединение элементарных биллиардов $\varSigma_1, \ldots, \varSigma_{N}$ без границ. Рассмотрим один из таких биллиардов $\varSigma_j$ и введём биллиардный закон на его границах: это будет плоский элементарный биллиард, а потому в его прообразе при естественной проекции будет слой $\varLambda = \alpha$ одного из атомов. Тогда, варьируя $\alpha$, мы получаем, что прообраз $\varSigma_j$ при $\varLambda \in [b-\varepsilon, b+ \varepsilon]$ и введённом биллиардном отражении на <<вырезанных>> границах, гомеоморфен 3-атому из списка теоремы 3, а точнее, гомеоморфен $V_j \times S^1$. Теперь уберем биллиардное отражение на границе элементарного биллиарда $\varSigma_j$: каждый слой $V_j$ гомеоморфен прообразу оснащённой векторами скорости гиперболы, а значит, при убирании двух таких многообразие $V_j \times S^1$ разобьётся на два многообразия $V_j \times I$, что и требовалось доказать.
	
	Перейдем к заключительному пункту теоремы 3. Докажем, что алгоритм 5 действительно корректно склеивает двумерные клетки $T_i$ с трехмерными клетками $2(V_j \times I)$. Для всех слоев интеграла $\varLambda \neq b$ доказательство полностью аналогично доказательству теоремы 3, так как эти слои не являются особыми. Для построения особого слоя $G$ атома $U$ сначала разрежем биллиард $\varOmega$ по фокальной прямой и, следуя алгоритму, рассмотрим биллиарды $\varOmega^{top}$ и $\varOmega^{bot}$. Данные биллиарды не пересекаются с фокальной прямой внутри области, а потому его можно разбить на элементарные биллиарды вида $\varSigma_1 \cap \varOmega^{top}, \ldots, \varSigma_N \cap \varOmega^{top}$ и $\varSigma_1 \cap \varOmega^{bot}, \ldots, \varSigma_N \cap \varOmega^{bot}$ и любой элементарный биллиард $\varSigma_j \cap \varOmega^{top}$ или $\varSigma_j \cap \varOmega^{bot}$ будет биллиардом без особенности на уровне $\varLambda = b$ (иначе говоря, 3-атом гомеоморфен произведению тора на отрезок). Способ приклейки таких биллиардов к графам $Gr^{>}_{i}$, $Gr^{=}_{i}$ или $Gr^{<}_{i}$ опять же аналогичен теореме 3. Дополнительно в Алгоритме 4 помечаются ребра, по которым производился разрез 2-атомов: именно склейка по ним отвечает склейки по фокальной прямой на уровне атомов.

\end{proof}
\begin{comment}
Из доказательства теоремы 5 следует, что если биллиард $\varOmega$ не содержит сегментов форкальной прямой внутри или на границе области, то в таком биллиарде $U \cong G^k_2 \times I$, где $G^k_2$ --- 2-поверхность рода $k$, где $k$ --- сложность биллиарда. Так как в таком случае Алгоритм 3 и Алгоритм 5 в точности повторяют Алгоритмы 1 и 2 для не седловых слоев.  
\end{comment}
\begin{comment}
Из теоремы 5 следует, что особый слой невыпуклого биллиарда $U$ склеивается из особых слоев выпуклых биллиардов $\varSigma_j$, входящих в его разбиение. Однако, если выбрать разбиение биллиарда $\varOmega$ каким-либо другим способом (например, разрезать по всем квадрикам на которых лежат особые точки), то это разбиение поднимется до другого разбиения комплекса $U$ на клетки. Например, биллиард, изображенный на рисунке 20 разбивался на 2 элементарных биллиарда типа $A_0$, однако, его можно разбить на 3 элементарных биллиарда: 2 типа $A_0$ и на один элементарный биллиард типа $B_0$, проведя элллипс, на котором лежит особая точка. Это приводит к невозможности описания топологии атома $U$ только через описание двумерных клеток этого комплекса. 
\end{comment}

\section{Заключение.}
Предметом анализа данной работы были невыпуклые плоские биллиарды, а именно, исследовалась топология слоения Лиувилля для многообразия $Q^3$ в таких биллиардах. В ходе работы невыпуклый биллиард был сначала разбит на элементарные биллиарды (без углов $3\pi/2$), а потом полученное разбиение было <<поднято>> на уровень $Q^3$.  В теоремах 4 и 5 было представлено полное алгоритмическое описание топологии окрестности особого слоя критических значений дополнительного интеграла. Теорема 3 описывает окрестности особых слоев $\varLambda = \lambda_i$, а Теорема 5 --- топологию окрестности особого слоя $\varLambda = b$. Отметим, что ранее описание окрестностей критических значений не встречалось в других работах по данной тематике. 

Результаты, полученные в данной работе, докладывались на международных конфе\-ренциях (Ломоносов 2019, Ломоносов 2020, Воронежская Зимняя математическая школа С.Г. Крейна, Equadiff-2019, GDIS 2018, CIS 2018) и научно-исследовательском семинаре <<Совре\-менные геометрические методы>> под руководством акад. А. Т. Фоменко, проф. А. С. Мищенко, проф. А. В. Болсинова, проф. А. А. Ошемкова, проф. Е. А. Кудрявцевой, доц. И. М. Никонова, доц. А.Ю. Коняева, асс. В. В. Ведюшкиной (механико-матема\-тический факультет МГУ имени М. В. Ломоносова).
  
\section{Список литературы.}
\begin{enumerate}
\item {Табачников С.\,Л.}
Геометрия и бильярды. М.; Ижевск: НИЦ \textquotedblleft РХД\textquotedblright, 2011. 
\item {Болсинов А.В., Фоменко А.Т.} \label{Fom}
Интегрируемые гамильтоновы системы. Геометрия, топология, классификация. Т. 1. Ижевск: НИЦ \textquotedblleft РХД\textquotedblright, 1999. 
\item {Козлов В. В.} Некоторые интегрируемые обобщения задачи Якоби о геодезических на эллипсоиде // Прикладная математика и механика, том 59, вып. 1, 1995.
\item {Фокичева В. В.} \label{V} Топологическая классификация бильярдов в локально плоских областях, ограниченных дугами софокусных квадрик//
Матем. сб. 2015. \textbf{206}, \textnumero 10. 127--176.
\item {Фокичева В.В., Фоменко А.Т.} Интегрируемые бильярды моделируют важные интегрируемые случаи дина\-мики твердого тела // Докл. РАН. Сер. матем. 2015. \textbf{465}, \textnumero 2. 150---153. (Integrable Billiards Model Important Integrable Cases of Rigid Body Dynamics // Doklady Mathematics. 2015. \textbf{92}, N 3. 1---3. Pleiades Publishing, Ltd., 2015). 
\item {Ведюшкина В.В., Фоменко А.Т.} Интегрируемые топологические бильярды и эквивалентные динами\-ческие системы //Изв. РАН. Cер. матем. 2017. \textbf{81}, \textnumero 4. 20---67. 
\item {Fokicheva V., Fomenko T.} Billiard Systems as the Models for the Rigid Body Dynamics // Studies in Systems, Decision and Control. Advances in Dynamical Systems and Control. Vol.69. Ed. by V. Sadovnichiy, M. Zgurovsky. Springer; International Publishing Switzerland, 2016. 13---32. 
\item {Dragovic V., Radnovic M.} \label{DR}
Bifurcations of Liouville tori in elliptical billiards// 
Regular Chaotic Dyn. РАН. 2009.\textbf{ 14}. 479---494. 
\item {Драгович B., Раднович М.} Интегрируемые бильярды, квадрики и многомерные поризмы Понселе. М.; Ижевск: НИЦ \textquotedblleft РХД\textquotedblright, 2010. 
\item {Dragovic V., Radnovic M.} Pseudo-integrable billiards and arithmetic dynamics // Modern Dynamics. 2014. \textbf{8}, N1, 109---132. 
\item {Dragovic V., Radnovic M.} Pseudo-integrable billiards and double-reflection nets // Russ. Math. Surveys. 2015. \textbf{70}, N{1}, 1---31. 
\item {Dragovic V., Radnovic M.} Periods of pseudo-integrable billiards // Arnold Math. 2015. \textbf{1}, N{1}. 69---73. 
\item {Bolsinov A.V., Fomenko A.T., Oshemkov A.A.} Topological Methods in the Theory of Integrable Hamiltonian. Cambridge: Cambridge Scientific Publishers, 2006. 
\item {Кудрявцева Е.А., Никонов И.М., Фоменко А.Т.} Максимально симметричные клеточные разбиения поверхностей и их накрытия //Матем. сб. 2008. \textbf{199}, \textnumero 9. 3---96. 
\item {Кудрявцева Е.А., Никонов И.М., Фоменко А.Т.} Симметричные и неприводимые абстрактные многогранники // Современные проблемы математики и механики. Т. 3. Математика. Вып. 2. Геометрия и топология // Изд-во ЦПИ при мех.-мат. ф-те МГУ, 2009. 58---97. 
\item {Кудрявцева Е.А., Фоменко А.Т.} Группы симметрий правильных функций Морса на поверхностях // Докл. РАН. Cер. матем. 2012. \textbf{446}, \textnumero 6. 615---617. 
\item {Кудрявцева Е.А., Фоменко А.Т.} Любая конечная группа является группой симметрий некоторой карты (\textquotedblleft атома\textquotedblright - бифуркации) // Вестн. Моск.ун-та. Матем. Механ. 2013. \textnumero 3. 21---29. 
\item {Кудрявцева Е.А.} Аналог теоремы Лиувилля для интегрируемых гамильтоновых систем с неполными потоками// Докл. РАН. 2012. \textbf{445}, \textnumero 4. 383---385. 
\item {Москвин В.А.} Топология слоений Лиувилля интегрируемого биллиарда в невыпуклых областях// 
Вестник МГУ, 2018, \textbf{3}. 21 --- 29.
\item {Москвин В.А.} Алгоритмическое построение двумерных особых слоев атомов бильярдов в невыпуклых областях//
Вестник МГУ, 2020, \textbf{3}, 3 --- 12.
\end{enumerate}
\end{document}